\providecommand{\U}[1]{\protect\rule{.1in}{.1in}}
\newtheorem{theorem}{Theorem}
\theoremstyle{plain}
\newtheorem{corollary}{Corollary}
\newtheorem{definition}{Definition}
\newtheorem{example}{Example}
\newtheorem{lemma}{Lemma}
\newtheorem{remark}{Remark}
\numberwithin{equation}{section}
\begin{document}
\title[XCF on a negatively curved solid torus]{Cross curvature flow on a\\negatively curved solid torus}
\author{Jason Deblois}
\address[Jason Deblois]{ University of Illinois at Chicago}
\email{jdeblois@math.uic.edu}
\urladdr{http://www.math.uic.edu/\symbol{126}jdeblois/}
\author{Dan Knopf}
\address[Dan Knopf]{ University of Texas at Austin}
\email{danknopf@math.utexas.edu}
\urladdr{http://www.ma.utexas.edu/users/danknopf}
\author{Andrea Young}
\address[Andrea Young]{ University of Arizona}
\email{ayoung@math.arizona.edu}
\urladdr{http://math.arizona.edu/\symbol{126}ayoung}
\thanks{D.K.~ acknowledges NSF support in the form of grants DMS-0505920 and DMS-0545984.}

\begin{abstract}
The classic $2\pi$-Theorem of Gromov and Thurston constructs a negatively
curved metric on certain $3$-manifolds obtained by Dehn filling. By
Geometrization, any such manifold admits a hyperbolic metric. We outline a
program using cross curvature flow to construct a smooth one-parameter family
of metrics between the \textquotedblleft$2\pi$-metric\textquotedblright\ and
the hyperbolic metric. We make partial progress in the program, proving
long-time existence, preservation of negative sectional curvature, curvature
bounds, and integral convergence to hyperbolic for the metrics under consideration.

\end{abstract}
\maketitle

\section{Introduction}

In this note, we outline a program that uses cross curvature flow to answer
certain questions inspired by the $2\pi$-Theorem of Gromov and Thurston. We
begin and make partial progress toward completing this program. More
specifically, we consider cross curvature flow on a class of negatively curved
metrics on the solid torus, the simplest nontrivial handlebody. This is motivated 
by the \textquotedblleft Dehn surgery\textquotedblright\ construction in
3-manifold topology, of which we give a brief account below. We apply the flow
to a negatively curved metric described by Gromov--Thurston on a solid torus
with prescribed Dirichlet boundary conditions.

Perelman's use of Ricci flow with surgery to prove the Geometrization
Conjecture demonstrates the considerable power of geometric flows to address
questions about $3$-manifolds \cite{Pe1, Pe2}. Subsequent work, that of
Agol--Storm--Thurston \cite{AST}, for example, shows that Ricci flow may give
information even about $3$-manifolds known \textit{a priori} to admit a metric
of constant curvature. The results of \cite{AST} are obtained by using the
monotonicity of volume under Ricci flow with surgery.

Ricci flow is nonetheless an imperfect tool for analyzing \textit{hyperbolic}
$3$-manifolds --- those admitting metrics with constant negative sectional
curvatures. According to Geometrization, such manifolds form the largest class
of prime $3$-manifolds. Paradoxically, they are also the least understood
class. In particular, the facts that Ricci flow does not preserve negative
curvature and that singularities must be resolved by surgery leaves certain
basic questions unanswered. For example, one may ask whether the space of
negatively curved metrics on a manifold $M$ is connected or contractible, or
analogous questions for the space of metrics with negative curvature
satisfying various pinching conditions. In higher dimensions, these questions
are known to have negative answers \cite{FO1, FO2, FO3}; but it has been
conjectured and seems plausible that their answers should be positive if $M$
is a hyperbolic $3$-manifold.

Such questions have particular relevance to the study of hyperbolic
$3$-manifolds, because in practice one frequently encounters $3$-manifolds on
which it is possible explicitly to describe a metric of nonconstant negative
curvature and to discern certain geometric information; however, the
relationship between this metric and the hyperbolic metric --- which exists by
Geometrization --- is unclear. Such metrics are constructed by Namazi and
Souto in \cite{NaS}, for example, as well as in the famous $2\pi$-Theorem\ of
Gromov--Thurston \cite{GroTh}, which motivates our work here.

The $2\pi$-Theorem belongs a family of results describing the geometry of
\textquotedblleft fillings\textquotedblright\ --- that is, manifolds which
result from \textit{Dehn surgery} on cusps of a finite-volume hyperbolic
$3$-manifold. We describe this construction in greater detail in
\S \ref{Dehn}. The initial such result, the \textquotedblleft hyperbolic Dehn
surgery theorem,\textquotedblright\ \cite{Th1} asserts that most fillings of a
hyperbolic $3$-manifold admit hyperbolic metrics of their own, but it does not
explicitly describe the set of hyperbolic fillings. The $2\pi$-Theorem
remedies this deficiency, but with a weaker conclusion, supplying a metric
with negative sectional curvatures on a filling that satisfies a certain
explicit criterion. Another result in the spirit of the $2\pi$-Theorem is the
$6$-Theorem due to Agol \cite{Agol} and, independently, Lackenby \cite{Lack}.
This relaxes the hypotheses of the $2\pi$-Theorem at the expense of weakening
its conclusion to a purely topological statement, one that nonetheless implies
hyperbolizability using Geometrization.

A program more in the spirit of this paper is that of Hodgson--Kerckhoff
\cite{HK1, HK2, HK3}, which, under somewhat stronger hypotheses, describes a
family of \textquotedblleft cone\textquotedblright\ metrics. These are
hyperbolic but singular along an embedded one-manifold, and interpolate
between the original smooth hyperbolic metric and that on the filling. The
goal of our program is to give an alternate construction of an interpolating
family. We propose using \textit{cross curvature flow} (\textsc{xcf}) to
construct a one-parameter family of smooth metrics, with negative but
nonconstant sectional curvatures, which interpolate between the metric
supplied by the $2\pi$-Theorem and the hyperbolic metric guaranteed by Geometrization.

In \cite{ChowHam}, Chow and Hamilton introduced the \textsc{xcf} of metrics on
a $3$-manifold with uniformly signed sectional curvatures. They established
certain properties of the flow and conjectured that if the initial datum is a
metric with strictly negative sectional curvatures, then \textsc{xcf}
(suitably normalized) should exist for all time, preserve negative sectional
curvature, and converge asymptotically to a metric of constant curvature. This
conjecture would imply contractibility for the space of negatively curved
metrics on a hyperbolic $3$-manifold. An additional benefit of a proof of this
conjecture is that it should be easier to track the evolution of relevant
geometric quantities in a flow without singularities than it is to track their
evolution across surgeries.

\textsc{xcf} is a fully nonlinear, weakly parabolic system of equations, which
can be defined as follows: let $P_{ab}=R_{ab}-\frac{1}{2}Rg_{ab}$ denote the
Einstein tensor, and let $P^{ij}=g^{ia}g^{jb}P_{ab}$. One can define the cross
curvature tensor $X$ by
\begin{equation}
X_{ij}=\frac{1}{2}P^{uv}R_{iuvj}.
\end{equation}
Then the cross curvature flow of $(M^{3},g_{0})$ is%
\begin{align}
&  \frac{\partial}{\partial t}g=-2X,\label{xcf}\\
&  g(x,0)=g_{0}(x).
\end{align}
Since its introduction in \cite{ChowHam}, several papers concerning
\textsc{xcf} have appeared in the literature. Buckland established short-time
existence of \textsc{xcf} for smooth initial data on compact manifolds using
DeTurck diffeomorphisms to create a strictly parabolic system \cite{Buck}.
(Like Ricci flow, \textsc{xcf} is only weakly parabolic.) Chen and Ma showed
that certain warped product metrics on $2$-torus and $2$-sphere bundles over
the circle are solutions to \textsc{xcf} \cite{MaCh}. Solutions on
locally-homogeneous manifolds have recently been studied by Cao, Ni, and
Saloff-Coste \cite{CNS, CS}, and independently by Glickenstein \cite{Glick}.
Two of the authors of this paper have shown that (normalized) \textsc{xcf} is
asymptotically stable at hyperbolic metrics \cite{KnYo}. Also, in unpublished
earlier work, Andrews has obtained interesting estimates for more general
solutions of \textsc{xcf}.

Here we consider a family of negatively curved metrics on the solid torus
$D^{2}\times S^{1}$, with initial data that arise in the proof of the $2\pi
$-Theorem. We (1) show short-time existence of \textsc{xcf} with prescribed
boundary conditions for metrics in this family, (2) establish curvature bounds
depending only on the initial data (showing in particular that negative
curvature is preserved for this family), and (3) demonstrate long-time existence.

Although they fall short of confirming the Chow--Hamilton conjectures (even
for metrics in this family) our results do provide evidence in favor of those
conjectures. On one hand, it is encouraging that negative curvature is
preserved and long--time existence holds. On the other hand, the fully
nonlinear character of \textsc{xcf} and the high topological complexity of
negatively curved manifolds have not yet allowed a complete proof of
convergence to hyperbolic. In fact, we are not yet able to rule out
convergence to a soliton. Note that recent studies of \textsc{xcf} (properly
defined) on homogeneous manifolds admitting curvatures of mixed sign display a
much greater variety of behaviors than the corresponding examples for Ricci
flow (cf.~\cite{CNS, CS}, \cite{Glick}).

The purpose of this paper is to introduce our program and to report on partial
progress toward its resolution. The paper is organized as follows. In
\S \ref{Dehn}, we give a brief description of the $2\pi$-Theorem and related
work. In \S \ref{Description}, we describe the proposed program and give some
of its motivations. In \S \ref{Setup}, we derive relevant curvature equations
and establish appropriate boundary conditions. In \S \ref{Summary}, we report
on our progress thus far, summarizing the new results established in this
paper that support the program. To streamline the exposition, the (sometimes
lengthy) proofs of these theorems are collected in \S \ref{CollectedProofs}.

We hope that the results in this paper contribute to further progress in the
program, and ultimately to further applications of curvature flows to the
study of hyperbolic $3$-manifolds.

\section{Dehn filling and the $2\pi$-Theorem\label{Dehn}}

Suppose $M$ is a compact $3$-manifold with a single boundary component: a
torus $T^{2}\cong S^{1}\times S^{1}$. A closed manifold may be produced from
$M$ by \textit{Dehn filling} $\partial M$, a well known construction in which
a solid torus is glued to $M$ by a homeomorphism of their boundaries. More precisely:

\begin{definition}
Suppose $M$ is a compact $3$-manifold with a torus boundary component $T$. Let
$D^{2}$ be the unit disk in $\mathbb{R}^{2}$; let $h$ be a homeomorphism from
$\partial(D^{2}\times S^{1})=\partial D^{2}\times S^{1}$ to $T$; and for $p\in
S^{1}$, take $\lambda=h(\partial D^{2}\times\{p\})$. Define the manifold
$M(\lambda)$ obtained by \textit{Dehn filling }$M$\textit{ along }$\lambda$
by:
\[
M(\lambda)=\frac{M\sqcup(D^{2}\times S^{1})}{x\sim h(x),~x\in\partial
D^{2}\times S^{1}}.
\]
We call the isotopy class of $\lambda$ in $T$ the \emph{filling slope}, and
the image in $M(\lambda)$ of $D^{2}\times S^{1}$ the \emph{filling torus.}
\end{definition}

Clearly, the topology of $M(\lambda)$ may vary depending on the choice of $h$,
but it is a standard fact that it is determined up to homeomorphism by the
choice of filling slope.

Now suppose that $M$ is a hyperbolic $3$-manifold with finite volume. (That
is, $M$ is a smooth manifold equipped with a complete, finite-volume
Riemannian metric which has sectional curvatures $\equiv-1$.) It follows from
the Margulis Lemma (\cite{Margulis}, cf.~\cite[Theorem~12.5.1]{Rat}) that if
$M$ is noncompact, each of its \textit{cusps} is diffeomorphic to $T^{2}%
\times\mathbb{R}^{+}$. (Here a \textquotedblleft cusp\textquotedblright\ is a
component of the complement of a sufficiently large compact
submanifold-with-boundary.) Thus removing the cusps of $M$ yields a compact
$3$-manifold $\widehat{M}$ whose boundary is a disjoint union of tori.

Suppose for simplicity that $M$ has a single cusp. The hyperbolic Dehn surgery
Theorem, due to Thurston \cite[\S 5]{Th1}, asserts that for all but finitely
many choices of slope $\lambda$ on $\partial\widehat{M}$, the closed manifold
$\widehat{M}(\lambda)$ admits a hyperbolic metric. In the past thirty years,
an enormous quantity of work has been devoted to explicitly describing the set
of \textquotedblleft hyperbolic filling slopes\textquotedblright, and
investigating the relationship between the hyperbolic metrics on $M$ and
$\widehat{M}(\lambda)$. This is the context of the $2\pi$-Theorem, due to
Gromov and Thurston (\cite{GroTh}, cf.~\cite[Theorem~9]{BH}).

A further consequence of the Margulis Lemma is that a parameterization
$\phi\colon\,T^{2}\times\mathbb{R}^{+}\rightarrow M$ for the cusp of $M$ may
be chosen so that the hyperbolic metric on $M$ pulls back to a warped product
metric on $T^{2}\times\mathbb{R}^{+}$ which restricts on each level torus
$T^{2}\times\{t\}$ to a Euclidean metric. (We will precisely describe this
metric below.) We call such a parameterization \textquotedblleft
good\textquotedblright.

\begin{theorem}
[Gromov--Thurston]\label{GT2pi}Let $M$ be a one-cusped hyperbolic $3$-manifold
with a good parameterization $\phi\colon\,T^{2}\times\mathbb{R}^{+}\rightarrow
M$ of the cusp of $M$, and suppose $\lambda$ is a slope on $\partial
\widehat{M}$ such that $\phi^{-1}(\lambda)$ has a geodesic representative with
length greater than $2\pi$ in some level torus. Then $\widehat{M}(\lambda)$
admits a Riemannian metric with negative sectional curvatures.
\end{theorem}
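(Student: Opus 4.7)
The plan is to construct a doubly-warped product metric on the filling solid torus $V=D^{2}\times S^{1}$ with strictly negative sectional curvatures that glues smoothly to the hyperbolic metric on a cusp-truncation $\widehat{M}$ of $M$. First, I would work in the good parameterization, in which the cusp metric is $dt^{2}+e^{-2t}h$ for a fixed flat metric $h$ on $T^{2}$; the geodesic representative of $\phi^{-1}(\lambda)$ in the level torus at height $t$ then has length $\ell_{0}\,e^{-t}$, where $\ell_{0}$ is its length in $h$. Using the hypothesis, choose $T$ so that $\ell:=\ell_{0}e^{-T}>2\pi$, and cut the cusp off at level $t=T$. Let $m$ be the length of a simple closed curve on the boundary torus whose class together with $[\lambda]$ generates $\pi_{1}(T^{2})$; any shear between this curve and $\lambda$ can be absorbed into the gluing diffeomorphism and will not affect the essential convexity argument.

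Next, parameterize $V$ by $(r,\theta,\zeta)$ with $r\in[0,R]$ the distance to the core, $\theta\in\mathbb{R}/2\pi\mathbb{Z}$, and $\zeta\in\mathbb{R}/\mathbb{Z}$, and attach $V$ to $\widehat{M}$ so that the meridian $\partial D^{2}\times\{\mathrm{pt}\}$ is identified with $\lambda$. On $V$, I would put a metric
\[
g_{V}=dr^{2}+\varphi(r)^{2}\,d\theta^{2}+\psi(r)^{2}\,d\zeta^{2},
\]
whose sectional curvatures in the orthonormal frame $(\partial_{r},\varphi^{-1}\partial_{\theta},\psi^{-1}\partial_{\zeta})$ are $-\varphi''/\varphi$, $-\psi''/\psi$, and $-\varphi'\psi'/(\varphi\psi)$. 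Negativity of all sectional curvatures therefore reduces to the pointwise conditions $\varphi''>0$, $\psi''>0$, and $\varphi'\psi'>0$ on $(0,R]$. Smoothness at the core forces $\varphi(0)=0$, $\varphi'(0)=1$, $\psi(0)>0$, and $\psi'(0)=0$; smooth gluing at the interface, using that on the cusp side the profile $\tilde{\varphi}(t)=(\ell_{0}/2\pi)e^{-t}$ satisfies $\tilde{\varphi}'=-\tilde{\varphi}$ and that $\partial_{r}$ is antiparallel to $\partial_{t}$ across the seam, forces $\varphi(R)=\varphi'(R)=\ell/(2\pi)$ and $\psi(R)=\psi'(R)=m$.

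The heart of the proof, and the place where the $2\pi$-hypothesis is used, is the construction of $\varphi$. Strict convexity on $[0,R]$ makes $\varphi'$ strictly increase from $1$ to $\ell/(2\pi)$, so already $\ell>2\pi$ is necessary. Integrating the strict inequality $1<\varphi'(r)<\ell/(2\pi)$ on $(0,R)$ yields $R<\varphi(R)<R\,\ell/(2\pi)$, and matching $\varphi(R)=\ell/(2\pi)$ then forces the pinch
\[
1<R<\frac{\ell}{2\pi},
\]
an interval that is nonempty \emph{precisely} when $\ell>2\pi$. For any $R$ in this interval, a smooth strictly convex $\varphi$ meeting the four prescribed boundary values is easy to produce, e.g.\ by specifying $\varphi'$ as a suitable smooth increasing interpolant with integral $\ell/(2\pi)$. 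Constructing $\psi$ is then straightforward: since $R>1$, there is room to pick $\psi(0)\in(0,m)$ and to fit a smooth strictly convex $\psi$ meeting $\psi'(R)=m$ and $\psi(R)=m$.

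The main obstacle is exactly this pinch $1<R<\ell/(2\pi)$: it is the one point where the hypothesis $\ell>2\pi$ is indispensable, and it explains why the bound cannot be relaxed without a fundamentally different construction. To promote the $C^{1}$ gluing to a $C^{\infty}$ Riemannian metric on $\widehat{M}(\lambda)$, I would either prescribe the Taylor data of $\varphi$ and $\psi$ at $r=R$ to agree to infinite order with the cusp exponentials (so that $\varphi^{(k)}(R)=\ell/(2\pi)$ and $\psi^{(k)}(R)=m$ for all $k$), or leave a $C^{1,1}$ match and smooth it in a thin collar; strict convexity and the core smoothness conditions are open under $C^{2}$ perturbation, so negativity of the sectional curvatures survives the smoothing.
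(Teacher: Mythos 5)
Your proposal is correct and follows essentially the same route as the paper's sketch: a rotationally symmetric doubly-warped metric on $D^{2}\times S^{1}$ with the same sectional-curvature formulas, the same boundary matching to the cusp exponential, and the same key constraint $1<R<\ell/(2\pi)$ (the paper's $1<r_{0}<\ell_{1}/2\pi$, with $f_{s}$ convex increasing from $2\pi$ to $M$), which is precisely where the $2\pi$-hypothesis enters; you also flesh out the sufficiency step that the paper defers to Bleiler--Hodgson. One small inaccuracy worth noting: the inequality $R>1$ is a consequence of the $\varphi$-constraints, not a precondition for constructing $\psi$ — any strictly increasing $\psi'$ from $0$ to $m$ with small enough integral works for arbitrary $R$, so invoking $R>1$ there is a red herring.
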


The proof is constructive, yielding a metric on $\widehat{M}(\lambda)$ which
on $\widehat{M}$ is isometric to the metric inherited from $M$, and on the
filling torus has variable negative curvature. Below we will sketch a proof,
describing the metrics under consideration on $D^{2}\times S^{1}$. It will be
convenient to use \textit{cylindrical coordinates} $(\mu,\lambda,r)\in
\lbrack0,1)\times\lbrack0,1)\times\lbrack0,1]$, where with $D^{2}\times S^{1}$
naturally embedded in $\mathbb{R}^{2}\times\mathbb{R}^{2}$, we have
\begin{align*}
x_{1} &  =r\cos(2\pi\mu), &  &  x_{2}=\cos(2\pi\lambda),\\
y_{1} &  =r\sin(2\pi\mu), &  &  y_{2}=\sin(2\pi\lambda).
\end{align*}
The \textit{core} of $D^{2}\times S^{1}$ is the set $\{(0,0)\}\times
S^{1}=\{(0,\lambda,0)\}$, and a \textit{meridian disk} is of the form
$D^{2}\times\{p\}=\{(\mu,\lambda_{0},r)\}$ for fixed $p$ or $\lambda_{0}$. The
standard rotations of $D^{2}\times S^{1}$ are of the form $(\mu,\lambda
,r)\mapsto(\mu+\mu_{0},\lambda,r)$, fixing the core and rotating each meridian
disk, or $(\mu,\lambda,r)\mapsto(\mu,\lambda+\lambda_{0},r)$. All of the
metrics we use will be \textit{rotationally symmetric}, and \textit{diagonal}
in cylindrical coordinates, which implies that they have the form
\begin{equation}
G(r)=f^{2}(r)\,d\mu^{2}+g^{2}(r)\,d\lambda^{2}+h^{2}(r)\,dr^{2}.
\end{equation}
Here the functions $f$, $g$, and $h$ must satisfy the following regularity
conditions at $r=0$ to ensure that they extend smoothly across the core.

\begin{lemma}
$G$ extends to a smooth metric on $D^{2}\times S^{1}$ if and only if $g(r)$
and $h(r)$ extend to smooth even functions on $\mathbb{R}$ with $g(0),h(0)>0$,
and $f$ extends to a smooth odd function on $\mathbb{R}$ with $f_{r}(0)={2\pi
}h(0)$.
\end{lemma}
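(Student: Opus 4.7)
The fundamental difficulty is that the cylindrical coordinates $(\mu, \lambda, r)$ degenerate along the core $\{r=0\}$, so smoothness of $G$ there cannot be read off from the regularity of $f$, $g$, $h$ in these coordinates. The plan is to pass to local coordinates $(x, y, \lambda)$ valid across the core, where $x = r\cos(2\pi\mu)$ and $y = r\sin(2\pi\mu)$, and characterize when the components $G_{ij}$ in this chart extend smoothly to $r=0$. Throughout I would rely on the standard fact that a rotationally symmetric function $\phi(r)$ on $\mathbb{R}^2$ extends to a smooth function across the origin if and only if $\phi$ is the restriction of a smooth even function on $\mathbb{R}$.

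Applied to $G_{\lambda\lambda} = g^2(r)$, this immediately yields $g$ smooth even with $g(0)>0$ (the latter for nondegeneracy). For the remaining block, a direct computation using $dr = (x\,dx+y\,dy)/r$ and $d\mu = (x\,dy - y\,dx)/(2\pi r^2)$ gives, with the shorthand $H(r) := h^2(r)$ and $F(r) := f^2(r)/(4\pi^2 r^2)$,
\begin{align*}
G_{xx} + G_{yy} &= H(r) + F(r), \\
G_{xx} - G_{yy} &= [H(r) - F(r)]\,(x^2-y^2)/r^2, \\
G_{xy} &= [H(r) - F(r)]\,xy/r^2.
\end{align*}
Since $G_{xx}+G_{yy}$ is rotationally symmetric, smoothness forces $H+F$ to be smooth even in $r$; and since the angular factors $(x^2-y^2)/r^2$ and $xy/r^2$ are bounded but not continuous at the origin, smoothness of the remaining entries forces $(H-F)/r^2$ to extend to a smooth even function of $r$. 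Combining these two requirements shows that $H$ and $F$ are each smooth and even, with $H(0) = F(0)$.

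Translating back to $f$ and $h$ handles necessity: smoothness and evenness of $H=h^2$ with $H(0)>0$ gives $h$ smooth even with $h(0)>0$, while $f^2 = 4\pi^2 r^2 F$ with $F$ smooth even and $F(0)=h^2(0)$ forces $f$ to extend as a smooth odd function of $r$ with $f_r(0) = 2\pi\sqrt{F(0)} = 2\pi h(0)$. The converse is routine. Assuming the stated regularity, oddness of $f$ makes $f(r)/r$ smooth even with value $2\pi h(0)$ at $r=0$, so $F = (f/r)^2/(4\pi^2)$ is smooth even with $F(0) = H(0)$; hence $H - F$ is smooth even and vanishes at $r=0$, and because the Taylor series of a smooth even function contains only even powers, $(H-F)/r^2$ automatically extends to a smooth even function. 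Reading off the displayed formulas then shows every $G_{ij}$ is smooth in $(x,y,\lambda)$.

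The main obstacle is identifying the correct matching condition $f_r(0) = 2\pi h(0)$ and verifying its necessity via the trace-free part of the two-by-two block; the underlying analytic fact about $SO(2)$-invariant smooth functions on $\mathbb{R}^2$ is standard but must be applied carefully both to the trace $H+F$ and to the coefficient $(H-F)/r^2$ of the trace-free part.
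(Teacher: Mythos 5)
The paper gives no proof — it says only that this is a standard Riemannian-geometry fact and omits it — so there is nothing of theirs to compare against. Your sketch is a correct rendering of the standard argument: pass to the Cartesian chart $(x,y,\lambda)$ with $x=r\cos(2\pi\mu)$, $y=r\sin(2\pi\mu)$, compute the components $G_{xx},G_{yy},G_{xy}$, and reduce to the characterization of smooth rotationally-equivariant functions on $\mathbb{R}^2$. The displayed formulas are right, and so is the resulting matching condition $f_r(0)=2\pi h(0)$ (including the factor $2\pi$ coming from the normalization $\mu\in[0,1)$).

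One step deserves tightening. In the forward direction, the sentence beginning "since the angular factors $(x^2-y^2)/r^2$ and $xy/r^2$ are bounded but not continuous at the origin ... forces $(H-F)/r^2$ to extend to a smooth even function" overstates what that observation gives you: discontinuity of the angular factor yields only $(H-F)(0)=0$, not directly smoothness of the quotient by $r^2$. A clean chain using ingredients you already have is: restrict $G_{xx}$ and $G_{yy}$ to the line $y=0$ to get $H(|x|)$ and $F(|x|)$ smooth in $x$, hence $H$ and $F$ smooth and even; then continuity of $G_{xy}$ forces $H(0)=F(0)$; finally, a smooth even function vanishing at $0$ is $r^2$ times a smooth even function by Hadamard's lemma (equivalently, Whitney's theorem that a smooth even function is a smooth function of $r^2$), which is what actually justifies $(H-F)/r^2$ smooth even. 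The parallel clause in the converse ("because the Taylor series of a smooth even function contains only even powers") should likewise invoke Hadamard rather than a formal power-series manipulation. You should also record nondegeneracy in the converse direction: the $2\times 2$ block has trace $F+H>0$ and determinant $FH>0$, so it is positive-definite, and $G_{\lambda\lambda}=g^2>0$. With those repairs the argument is complete.
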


\noindent Because this is a standard result in Riemannian geometry, we omit
the proof.

\medskip

In a rotationally-symmetric diagonal metric on $D^{2}\times S^{1}$, it will
frequently prove convenient to parameterize by an outward-pointing radial
coordinate which measures distance from the core:
\begin{equation}
s(r)=\int_{0}^{r}h(\rho)\,\mathrm{d\rho}. \label{DefineArcLength}%
\end{equation}
Then $G$ may be written in the form
\begin{equation}
G(s)=f^{2}(s)\,d\mu^{2}+g^{2}(s)\,d\lambda^{2}+ds^{2},\qquad s\in
\lbrack0,s_{0}]. \label{metric}%
\end{equation}
Here $s_{0}$ is the distance from the core to the boundary. The smoothness
criteria of the lemma above translate to the requirements that $f$ extends to
a smooth odd function of $s$, and $g$ to an even function of $s$, such that
$f_{s}(0)={2\pi}$ and $g(0)>0$.

\begin{example}
There is a standard description of the cusp of a hyperbolic manifold as the
quotient of a \textit{horoball} in hyperbolic space $\mathbb{H}^{3}$ by a
group of isometries $\Gamma\simeq\mathbb{Z}^{2}$. We will use the upper
half-space model for $\mathbb{H}^{3}$, namely $\{(x,y,z)\in\mathbb{R}%
^{3}\,|\,z>0\}$ with the Riemannian metric $G_{h}=z^{-2}(dx^{2}+dy^{2}%
+dz^{2})$. Then a standard horoball is $\mathcal{H}_{1}=\{(x,y,z)\,|\,z\geq
1\}$, and the elements of $\Gamma$ are Euclidean translations fixing the $z$
coordinate, each of the form $\phi_{(a,b)}:(x,y,z)\mapsto(x+a,y+b,z)$ for
$(a,b)\in\mathbb{R}^{2}$.

Without loss of generality, we may assume that $\Gamma$ is generated by
elements of the form $\phi_{(M,0)}$ and $\phi_{(x_{0},L)}$, where $M,L>0$.
Then $M$ is the length of a \textquotedblleft meridian\textquotedblright%
\ geodesic of the torus $T=\partial(\mathcal{H}_{1}/\Gamma)=\{(x,y,1)\}/\Gamma
$, and $L$ is the length of an arc joining the meridian geodesic to itself,
perpendicular to it at both ends. (The final constant $x_{0}$ of the
definition is a sort of \textquotedblleft twisting parameter\textquotedblright%
\ and does not enter into computations using the metrics under consideration here.)

There is a map from the complement of the core in $D^{2}\times S^{1}$ to
$\mathcal{H}_{1}/\Gamma$, given in cylindrical coordinates by
\[
\Psi(\mu,\lambda,r)=\left(  M\mu,L\lambda,\frac{1}{r}\right)  ,\qquad
r\in(0,1].
\]
Using $\Psi$, the hyperbolic metric pulls back to
\[
\Psi^{\ast}G_{h}=M^{2}r^{2}\,d\mu^{2}+L^{2}r^{2}\,d\lambda^{2}+\frac{dr^{2}%
}{r^{2}},\qquad r\in(0,1].
\]
Using an outward-pointing radial coordinate which measures distance to the
boundary,
\[
s(r)=-\int_{r}^{1}\frac{1}{\rho}\,\mathrm{d\rho}\,=\log r,
\]
the hyperbolic metric takes the form
\[
G_{\mathrm{cusp}}=M^{2}e^{2s}\,d\mu^{2}+L^{2}e^{2s}\,d\lambda^{2}%
+ds^{2},\qquad s\in(-\infty,0].
\]

\end{example}

\begin{proof}
[Sketch of proof of Theorem~\ref{GT2pi}]One seeks a rotationally symmetric
diagonal metric on $D^{2}\times S^{1}$ of the form (\ref{metric}), which in
addition has negative sectional curvatures. One further requires that in a
neighborhood $U$ of $\partial(D^{2}\times S^{1})$, $f$ and $g$ take the form
\begin{equation}
f(s)=Me^{s-s_{0}},\qquad g(s)=Le^{s-s_{0}}. \label{cusp metric}%
\end{equation}
If this is the case, then the map $h:(\mu,\lambda,s)\mapsto(\mu,\lambda
,s-s_{0})$ takes $U$ isometrically to a neighborhood of $\partial
(\mathcal{H}_{1}/\Gamma)$ with the metric $G_{\mathrm{cusp}}$. Note also that
for any $p\in S^{1}$, $\lambda=h(\partial D^{2}\times\{p\})$ is a geodesic on
$T$ with length $M$; thus in this case, there is a metric on $\widehat
{M}(\lambda)$ yielding the theorem. A computation establishes that the
sectional curvatures of $G(s)$ are as follows:
\[
K_{\lambda\mu}=-\frac{f_{s}g_{s}}{fg},\qquad K_{\lambda s}=-\frac{g_{ss}}%
{g},\qquad K_{\mu s}=-\frac{f_{ss}}{f}.
\]
Since $f$ and $g$ are positive on $(0,s_{0}]$, and $f_{s}(0)=2\pi$, it follows
that all sectional curvatures are negative if and only if $f_{s}$, $g_{s}$,
$f_{ss}$, and $g_{ss}$ are positive on $(0,s_{0}]$. Since the components of
$G$ must satisfy (\ref{cusp metric}) near the boundary, one has $f_{s}%
(s_{0})=M$. Thus since $f_{ss}>0$, $M$ must be greater than $f_{s}(0)=2\pi$.
One can show that this necessary condition is also sufficient for there to
exist a metric of the form (\ref{metric}) satisfying our conditions. (See
\cite[Theorem 9]{BH}.)
\end{proof}

\section{Description of the program\label{Description}}

We now outline our proposed program. As was discussed above, the $2\pi
$-Theorem constructs a negatively curved metric on certain $3$-manifolds
obtained by Dehn filling. By Geometrization, any such manifold admits a
hyperbolic metric. Our goal is to use \textsc{xcf} to construct a smooth
one-parameter family of metrics between the \textquotedblleft$2\pi
$-metric\textquotedblright\ and the hyperbolic metric. Motivated by the
Hodgson--Kerckhoff constructions \cite{HK1, HK2, HK3}, one might expect the
program to work for slopes $\lambda$ on $\partial\widehat{M}$ such that
$\phi^{-1}(\lambda)$ has a geodesic representative of length $\geq L$, for
some $L>2\pi$. (Because our method depends on a stability result, one does not
expect to be able to take $L=2\pi$.)

The first step is to take as an initial datum a negatively curved metric
$G(s,0)$ on a solid torus given by equation~(\ref{metric}). We evolve this
metric by cross curvature flow. Because this is a manifold-with-boundary, we
impose time dependent Dirichlet boundary conditions corresponding to a
homothetically evolving hyperbolic metric. (Example~\ref{GeodesicTube} below
motivates this choice.) In this paper, we show that the flow exists for all
time and preserves negative sectional curvatures. Inspired by the conjecture
of Hamilton and Chow, we conjecture that (after normalization) $G(\cdot,t)$
converges uniformly to a hyperbolic metric in $C^{2,\alpha}$ as $t\rightarrow
\infty$, at least on sets compactly contained in the complement of the
boundary, i.e.~on $[0,1)\times\lbrack0,1)\times\lbrack0,r_{\ast}]$ for any
$r_{\ast}\in(0,1)$. The results in this paper support this conjecture.

Once the conjecture is proved, the second step of the program is to
homothetically evolve the negatively curved metric on $\widehat{M}(\lambda)$
that is guaranteed by the $2\pi$-Theorem. Recall that the metric on
$\widehat{M}$ is isometric to the hyperbolic metric inherited from $M$. The
Dirichlet boundary conditions on the solid torus are deliberately chosen to
match the boundary conditions on $\widehat{M}$. Once one has shown that the
gluing map is $\mathcal{C}^{2+\alpha}$, one may apply the asymptotic stability
of hyperbolic metrics under cross curvature flow (proved by two of the authors
\cite{KnYo}) to establish exponential convergence to a metric of constant
negative curvature.

A successful completion of this program would exhibit a one-parameter family
of negatively curved metrics connecting the \textquotedblleft$2\pi$
metric\textquotedblright\ on $\widehat{M}(\lambda)$ to the hyperbolic metric
that, according to Geometrization, this manifold must admit.

\section{Basic equations and boundary conditions\label{Setup}}

Let $M^{3}=D^{2}\times S^{1}$ be the solid torus, a $3$-manifold with boundary. 
As above, we restrict our attention to evolving metrics $G$ on $\mathcal{M}^{3}$ 
which are \emph{rotationally symmetric.} These are metrics that are diagonal in
cylindrical coordinates, and for which the isometry group acts transitively on
each torus consisting of the locus of points a fixed distance from the core.
We may write\ $G$ as
\begin{subequations}
\label{G}%
\begin{align}
G(s,t)  &  =f^{2}(s,t)\,d\mu^{2}+g^{2}(s,t)\,d\lambda^{2}+ds^{2}\\
&  =e^{2u(s,t)}\,d\mu^{2}+e^{2v(s,t)}\,d\lambda^{2}+ds^{2},
\end{align}
where $s$ is arclength measured outward from the core, as defined in
(\ref{DefineArcLength}).

The following example describes the prototypical negatively curved diagonal
metric on a solid torus: the hyperbolic metric of constant curvature $-1$.
This explains our choice of boundary conditions.
\end{subequations}
\begin{example}
\label{GeodesicTube}The metric on a regular neighborhood of a closed geodesic
in a hyperbolic manifold is given by
\[
G_{\mathrm{hyp}}=\left(  \frac{2}{1-r^{2}}\right)  ^{2}\left(  r^{2}\,d\mu
^{2}+\frac{b}{4}(1+r^{2})^{2}\,d\lambda^{2}+dr^{2}\right)
\]
for $r\in(0,1)$, where $b$ is the length of the geodesic. Using the geodesic
radial coordinate
\[
s(r)=\int_{0}^{r}\frac{2}{1-\rho^{2}}\,\mathrm{d\rho}\,=\log\left(  \frac
{1+r}{1-r}\right)  ,
\]
this takes the form
\[
G_{\mathrm{hyp}}=4\pi\sinh^{2}s\ d\mu^{2}+b\cosh^{2}s\ d\lambda^{2}+ds^{2}.
\]
The solution of \textsc{xcf} on any closed hyperbolic manifold with initial
data $G_{0}$ is $G(t)=\sqrt{1+4t}\,G_{0}$. On the solid torus, the solution of
\textsc{xcf} with initial data $G_{\mathrm{hyp}}$ is similarly $G(t)=\sqrt
{1+4t}\,G_{\mathrm{hyp}}$.
\end{example}

The principal sectional curvatures of the metric $G$ given by (\ref{G}) are%
\begin{subequations}
\begin{align}
K_{\lambda\mu}  &  =-\frac{f_{s}g_{s}}{fg}=-u_{s}v_{s},\\
K_{\lambda r}  &  =-\frac{g_{ss}}{g}=-(v_{ss}+v_{s}^{2}),\\
K_{\mu r}  &  =-\frac{f_{ss}}{f}=-(u_{ss}+u_{s}^{2}).
\end{align}

We are interested in the case that $G$ has negative sectional curvatures. This
occurs when $f$ and $g$, in addition to being positive, are monotonically
increasing convex functions of $s$ that satisfy the following conditions at
the origin:
\end{subequations}
\begin{subequations}
\label{core}%
\begin{align}
\lim_{s\rightarrow0}\frac{g_{s}}{f}  &  =\lim_{s\rightarrow0}\frac{g_{ss}%
}{f_{s}}=\lim_{s\rightarrow0}g_{ss}>0,\\
\lim_{s\rightarrow0}\frac{f_{ss}}{f}  &  =\lim_{s\rightarrow0}\frac{f_{sss}%
}{f_{s}}=\lim_{s\rightarrow0}f_{sss}>0.
\end{align}

One finds from the first equality in (\ref{core}) that the sectional
curvatures $K_{\mu\lambda}$ and $K_{r\lambda}$ are identical at the core. This
reflects the fact that rotation about the core is an isometry, and so the
sectional curvatures tangent to any two planes containing $\frac{\partial
}{\partial\lambda}$ are equal.

Henceforth, we assume the initial metric $G_{0}$ has negative sectional
curvatures. We denote the absolute values of the principal sectional
curvatures by $\alpha$, $\beta$, and $\gamma$, namely%
\end{subequations}
\begin{equation}
\alpha=-K_{\lambda\mu},\qquad\beta=-K_{\lambda r},\qquad\gamma=-K_{\mu r}.
\end{equation}

The assumption of negative sectional curvature imposes certain constraints on
the initial values of $f$, $g$ and $h$, as mentioned above. Namely, we require
that $f(1,0)=\ell_{1}>2\pi$ and that the initial radius $r_{0}=s(1,0)$
satisfies $1<r_{0}<\ell_{1}/2\pi$. We also assume that the metric
$G_{0}=G(\cdot,0)$ obeys a global $C^{2+\theta}$ bound and that $G_{0}$ has
constant negative sectional curvature at the core --- namely, that
$\alpha=\beta=\gamma>0$ at $r=0$.

We now apply cross curvature flow to a metric of the form (\ref{G}). The
evolution of this metric under \textsc{xcf} is equivalent to the system
\begin{subequations}
\label{xcf fgh}%
\begin{align}
f_{t}  &  =\frac{f_{s}g_{s}}{fg}f_{ss},\\
g_{t}  &  =\frac{f_{s}g_{s}}{fg}g_{ss},\\
h_{t}  &  =\frac{f_{ss}g_{ss}}{fg}h.
\end{align}
Notice that these equations remain nondegenerate only as long as $f_{s}%
g_{s}/(gf)$ remains strictly positive.

Sometimes, it is more tractable to work with the equations for%
\end{subequations}
\begin{equation}
u=\log f,\qquad v=\log g,\qquad w=\log h.
\end{equation}
These quantities evolve by%

\begin{subequations}
\label{xcf uvw}%
\begin{align}
u_{t}  &  =\alpha\gamma=u_{s}v_{s}u_{ss}+u_{s}^{3}v_{s}\\
v_{t}  &  =\alpha\beta=u_{s}v_{s}v_{ss}+v_{s}^{3}u_{s}\\
w_{t}  &  =\beta\gamma=u_{ss}v_{ss}+v_{s}^{2}u_{ss}+u_{s}^{2}v_{ss}+u_{s}%
^{2}v_{s}^{2},
\end{align}

\end{subequations}
\begin{remark}
Given $w(\cdot,0)$, the evolution of $w$ is determined by $u_{s}$, $v_{s}$,
$u_{ss}$, and $v_{ss}$. So we may (and usually do) suppress its equation in
what follows.
\end{remark}

\begin{remark}
\label{Parabolic}Observe that system~(\ref{xcf fgh}) for $f$ and $g$ ---
equivalently, system~(\ref{xcf uvw}) for $u$ and $v$ --- is strictly
parabolic. This is because our choice to parameterize by arclength $s$ fixes a
gauge and thereby breaks the diffeomorphism invariance of \textsc{xcf}. This
effectively replaces the DeTurck diffeomorphisms in our proof of short-time
existence (Theorem~\ref{ste}, below). This simplification is possible because,
in contrast to the more general situation considered by Buckland \cite{Buck},
the spatial dependence of our metrics is only on the radial coordinate
$r\in\lbrack0,1]$. A similar situation occurs for rotationally invariant Ricci
flow solutions; compare the system for $\varphi$ and $\psi$ considered in
\cite{AK04}, for example.
\end{remark}

Because the solid torus is a manifold-with-boundary, one must prescribe
boundary conditions. We prescribe the Dirichlet boundary conditions given by
\begin{subequations}
\label{boundary conditions}%
\begin{align}
u(x,t)  &  =\log\left(  f(1,0)(1+4t)^{1/4}\right) \\
v(x,t)  &  =\log\left(  g(1,0)(1+4t)^{1/4}\right) \\
w(x,t)  &  =\log\left(  {(1+4t)^{1/4}}\right)  .
\end{align}
As noted above, our choice of boundary conditions are those attained by a
homothetically evolving hyperbolic metric.

\section{Summary of results\label{Summary}}

We now state our results that establish long-time existence and preservation
of negative sectional curvature for the metrics under consideration in this
paper. To streamline the exposition, we postpone the proofs until the next section.

\subsection{Long-time existence and uniqueness}

We first establish short-time existence and uniqueness. Since we are
considering \textsc{xcf} on a manifold-with-boundary, these results do not
immediately follow from \cite{Buck}. So we use the machinery of \cite{ac}. Fix
$\theta\in(0,1)$.
\end{subequations}
\begin{definition}
Given $\tau>0$, let $E_{\tau}$ denote the space
\[
E_{\tau}:=C^{1+\theta}\left(  [0,\tau],C^{0}(\overline{\Omega},C^{N})\right)
\cap L^{\infty}\left(  [0,\tau),C^{2+2\theta}(\overline{\Omega},C^{N})\right)
,
\]
\noindent endowed with the norm
\[
||\vec{X}||_{E_{\tau}}:=\sup_{t\in\lbrack0,\tau]}||\vec{X}||_{C^{0}}%
+[D_{t}\vec{X}]_{C^{\theta}}+\sup_{t\in\lbrack0,\tau]}[D_{x}^{2}\vec
{X}]_{C^{2\theta}}.
\]

\end{definition}

Here, $C^{r+\theta}$ denotes the usual H\"{o}lder norm, and $L^{\infty}$ has
its standard meaning.

\begin{theorem}
\label{ste} There exists $\tau_{0}>0$ such that the fully nonlinear parabolic
system (\ref{xcf uvw}) has a unique solution $(u,v,w)\in E_{\tau_{0}}$.
\end{theorem}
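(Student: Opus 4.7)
The plan is to treat (\ref{xcf uvw}) as a strictly parabolic quasilinear system in the two unknowns $u$ and $v$, postponing $w$ to the end by the remark that its evolution is determined pointwise by $u_s,v_s,u_{ss},v_{ss}$. Set $\vec X=(u,v)$ and rewrite the system as $D_t\vec X=\mathcal N(\vec X)$ with
\begin{equation*}
\mathcal N(u,v)=u_s v_s\bigl(u_{ss},v_{ss}\bigr)+u_s v_s\bigl(u_s^2,v_s^2\bigr).
\end{equation*}
The principal part of the linearization $L_0:=D\mathcal N|_{\vec X_0}$ at the initial datum is the diagonal operator $a(s)\partial_s^2$ with
\begin{equation*}
a(s)=u_s(s,0)\,v_s(s,0)=\frac{f_s g_s}{fg}(s,0)=\alpha(s,0).
\end{equation*}
Because $G_0$ has strictly negative sectional curvatures with $\alpha=\beta=\gamma>0$ at the core by the hypotheses preceding (\ref{xcf fgh}), the coefficient $a$ is strictly positive on $[0,s_0]$, so $L_0$ is uniformly parabolic.

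With uniform parabolicity in hand, I would invoke the linear maximal-regularity theory for parabolic Dirichlet problems in the anisotropic H\"older space $E_\tau$ provided by \cite{ac}: the map $\vec Y\mapsto (D_t-L_0)\vec Y$ from functions in $E_\tau$ vanishing at $t=0$ and on $\{s=s_0\}$ to Lipschitz-in-$t$, $C^{2\theta}$-in-$x$ source terms is an isomorphism, with operator norm uniform in small $\tau$. Writing $\vec X=\vec X_0+\vec Y$ and rearranging (\ref{xcf uvw}) as
\begin{equation*}
(D_t-L_0)\vec Y=\bigl[\mathcal N(\vec X_0+\vec Y)-\mathcal N(\vec X_0)-L_0\vec Y\bigr]+\bigl[\mathcal N(\vec X_0)-D_t\vec X_0\bigr],
\end{equation*}
the right-hand side is quadratic in $(\vec Y,D_x\vec Y,D_x^2\vec Y)$ plus an inhomogeneity controlled by the initial error. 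A standard contraction-mapping argument on a small ball in $E_\tau$ then gives existence and uniqueness of $\vec Y$, hence of $(u,v)$, for some $\tau_0>0$; the remaining component $w$ is recovered by direct integration of its equation once $(u,v)$ are known. Before applying \cite{ac}, one needs the usual compatibility between the initial datum and the Dirichlet data (\ref{boundary conditions}) at the corner $(s_0,0)$: the zeroth-order match holds by the definition $u(s_0,0)=\log f(1,0)$, and the first-order match $u_t(s_0,0)=(1+4t)^{-1}|_{t=0}=1$ holds because the boundary data are those of a homothetically evolving hyperbolic cusp metric, for which $\alpha\gamma\equiv 1$.

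The principal technical obstacle will be the degeneracy of the cylindrical-coordinate description at the core $s=0$: although $a(s)=u_sv_s$ extends to a smooth positive function of $s$ by (\ref{core}), the individual factors $u_s=f_s/f$ and $v_s=g_s/g$ blow up and vanish respectively as $s\downarrow 0$, so the coefficients of $L_0$ viewed separately are singular. The remedy is to treat $\{s=0\}$ not as a boundary (the manifold is smooth there) but as an interior locus where the evenness/oddness constraints on $f,g$ force the equations and solutions to satisfy symmetry conditions equivalent to the smooth extension of $G$ across the core; regularity of the solution at $s=0$ is then a consequence of these constraints being preserved by (\ref{xcf uvw}). A secondary, more routine, obstacle is the verification of the higher-order compatibility conditions needed for the $C^{2+2\theta}$ spatial norm in the definition of $E_\tau$, which amounts to checking that successive time derivatives of the prescribed hyperbolic boundary data, computed from (\ref{xcf uvw}) at the corner, agree with the $s$-derivatives of the initial datum at $s_0$ to the appropriate order.
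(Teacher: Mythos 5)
Your strategy is the same as the paper's in substance --- reduce to a strictly parabolic system via the arclength gauge, verify compatibility of initial and boundary data, and invoke maximal-regularity theory in anisotropic H\"older spaces --- but whereas the paper cites the Acquistapace--Terreni result (Theorem~\ref{FullyNonlinear}) as a black box, you unpack its proof: freeze coefficients at $t=0$, note the diagonal principal part $\alpha(\cdot,0)\partial_s^2$ with $\alpha(\cdot,0)>0$, and run a contraction on a small ball in $E_\tau$. That is a legitimate route and makes the mechanism visible, at the cost of re-proving a standard theorem. Two remarks.

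First, your ``principal technical obstacle'' --- the degeneracy at the core $s=0$ --- is real, and in fact the paper's own proof passes over it in silence. As written, the system~(\ref{xcf uvw}) is in the unknowns $(u,v,w)$, but $u=\log f\to-\infty$ and $u_s=f_s/f$ blows up as $s\downarrow 0$, so the initial datum is not even in $C^0(\overline\Omega)$, much less $C^{2+2\theta}(\overline\Omega)$, if $\Omega=(0,s_0)$. Theorem~\ref{FullyNonlinear} therefore does not apply literally to~(\ref{xcf uvw}) on $(0,s_0)$ with Dirichlet data only at $s_0$. Your proposed remedy --- regard $s=0$ as an interior locus and use the parity constraints preserved by the flow --- is the right idea, but it is cleaner to implement it on the equivalent system~(\ref{xcf fgh}) for $(f,g)$, whose coefficient $\alpha=f_sg_s/(fg)$ extends to a smooth positive even function across $s=0$ and whose unknowns are odd/even respectively; one can then reflect to $(-s_0,s_0)$, apply the cited theorem there with Dirichlet data at $\pm s_0$, and note that parity is preserved. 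Flagging this explicitly is a genuine improvement over the paper's terse justification.

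Second, your first-order compatibility check is more than the cited theory requires: the Acquistapace--Terreni hypothesis stated in the paper is only the zeroth-order condition $H(0,x,\phi,D\phi)=0$, which the paper verifies. Your verification of $u_t(s_0,0)=1$ via $\alpha\gamma\equiv 1$ on the boundary tacitly assumes $G_0$ coincides with the hyperbolic cusp metric near $\partial(D^2\times S^1)$; that is indeed built into the $2\pi$-Theorem construction of $G_0$ and into the choice of boundary data~(\ref{boundary conditions}), but since \S\ref{Setup} states only a global $C^{2+\theta}$ bound on $G_0$, it is worth making this hypothesis explicit if you want the higher compatibility. (Also a small slip: $(D_t-L_0)$ maps $E_\tau$ into a $C^\theta$-in-$t$, not Lipschitz-in-$t$, target space.) None of this affects the correctness of the short-time existence conclusion.
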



\smallskip

It is not known in general whether solutions to cross curvature flow exist for
all time. However, in the particular case of our rotationally symmetric metric
on the solid torus, we can show \emph{a priori} bounds on the derivatives of
the sectional curvatures.

\begin{theorem}
\label{deriv ests} Let $G$ be a rotationally symmetric metric on the solid
torus given by equation~(\ref{G}). For every $\tau>0$ and $m\in\mathbb{N}$,
there exists a constant $C_{m}$ depending on $m,\tau,$ and $K$ such that if
\[
\beta(s,t),\gamma(s,t)\leq K\mbox{  for all  }s\in M\mbox{  and  }t\in
\lbrack0,\tau],
\]
then
\[
\left\vert \frac{\partial^{m}}{\partial s^{m}}\beta(s,t)\right\vert \leq
\frac{C_{m}}{t^{m}}\mbox{  for all  }s\in M\mbox{  and  }t\in\lbrack0,\tau],
\]
and
\[
\left\vert \frac{\partial^{m}}{\partial s^{m}}\gamma(s,t)\right\vert \leq
\frac{C_{m}}{t^{m}}\mbox{  for all  }s\in M\mbox{  and  }t\in\lbrack0,\tau].
\]

\end{theorem}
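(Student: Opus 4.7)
The strategy is a Shi--Bernstein maximum principle argument applied inductively in $m$ to the PDEs satisfied by the spatial derivatives of $\beta$ and $\gamma$. The first step is to differentiate system~(\ref{xcf uvw}) in $s$ to obtain quasilinear parabolic equations of the schematic form
\begin{align*}
\beta_t &= \alpha\,\beta_{ss} + 2(\alpha_s + \alpha v_s)\,\beta_s + (\alpha_{ss}+2v_s\alpha_s)\,\beta,\\
\gamma_t &= \alpha\,\gamma_{ss} + 2(\alpha_s + \alpha u_s)\,\gamma_s + (\alpha_{ss}+2u_s\alpha_s)\,\gamma,
\end{align*}
in which $\alpha = u_s v_s$ plays the role of diffusion coefficient and in which $\alpha_s$, $\alpha_{ss}$ are polynomial expressions in $u_s, v_s, \beta, \gamma$ and their $s$-derivatives. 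The hypothesis $\beta,\gamma\le K$, combined with the Dirichlet data~(\ref{boundary conditions}), the evolution equations~(\ref{xcf uvw}) for $u,v$, and the core conditions~(\ref{core}), supplies the uniform parabolicity needed to run a maximum principle on $[0,\tau]$.

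For the base case $m=1$, I would introduce the auxiliary function
\[
F_1(s,t) = t^{2}\bigl(\beta_s^{2}+\gamma_s^{2}\bigr) + A\bigl(\beta^{2}+\gamma^{2}\bigr),
\]
with $A=A(K,\tau)$ chosen large. A direct computation yields an inequality of the schematic form
\[
\partial_t F_1 - \alpha\,\partial_s^{2}F_1 \;\le\; -c\,t^{2}\bigl(\beta_{ss}^{2}+\gamma_{ss}^{2}\bigr) + C(K)\bigl[t\bigl(\beta_s^{2}+\gamma_s^{2}\bigr)+\beta^{2}+\gamma^{2}\bigr],
\]
and the choice of $A$ allows the negative term to absorb the remainder, so $F_1$ obeys a parabolic maximum principle on $\overline{M}\times[0,\tau]$. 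The spatio-temporal maximum can be attained only at $t=0$ (where the $t^{2}$ weight kills the derivative contribution) or on the outer boundary $r=1$ (where $\beta$, $\gamma$ and their $s$-derivatives may be read off from the homothetic Dirichlet data), which gives $|\beta_s|, |\gamma_s| \le C_1/t$. The inductive step repeats the same template with
\[
F_m = t^{2m}\bigl((\partial_s^{m}\beta)^{2}+(\partial_s^{m}\gamma)^{2}\bigr) + A_m\,t^{2(m-1)}\bigl((\partial_s^{m-1}\beta)^{2}+(\partial_s^{m-1}\gamma)^{2}\bigr),
\]
differentiating the $\beta$- and $\gamma$-equations $m$ times and invoking the inductive hypothesis to control every lower-order term that appears, including the higher derivatives of $\alpha$ generated by the product rule.

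The main obstacle is handling the two artificial ``boundaries'' of the cylindrical parameterization. At the outer boundary $r=1$ the Dirichlet data~(\ref{boundary conditions}) are merely homothetic rescalings of the initial metric, so boundary bounds on $\partial_s^{m}\beta$ and $\partial_s^{m}\gamma$ reduce to the $C^{2+\theta}$ regularity of $G_0$ together with the one-dimensional ODE obtained by restricting (\ref{xcf uvw}) to $r=1$. More delicate is the core $s=0$, which is a coordinate singularity rather than a genuine boundary; one must verify, using the smooth-extension parities stated in the lemma after~(\ref{metric}) and the core conditions~(\ref{core}), that each $F_m$ extends smoothly across $s=0$ so that the interior maximum principle genuinely applies there. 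Once both regularity checks are in hand, the inductive argument closes and produces the claimed bounds $|\partial_s^{m}\beta|, |\partial_s^{m}\gamma| \le C_m/t^{m}$.
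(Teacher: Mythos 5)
Your proposal follows essentially the same Bernstein--Bando--Shi maximum-principle strategy as the paper, but the two choose different auxiliary quantities. You use the textbook-style weighted square
$F_m=t^{2m}\bigl((\partial_s^m\beta)^2+(\partial_s^m\gamma)^2\bigr)+A_m t^{2(m-1)}(\cdots)$,
whereas the paper uses the \emph{unsquared} derivative with a linear time weight: $M_\pm=\pm t\beta_s+\lambda_1\beta^2+\lambda_2\gamma^2$ (and separate $N_\pm$ for $\gamma_s$). Both deliver the claimed $C_m/t^m$ bound. The paper's version has a small technical advantage: the dangerous $t\,\alpha_s\beta_{ss}$ cross term is swallowed by the transport term $(M_\pm)_s F_1$, so no Young's-inequality absorption against $-\alpha t^2\beta_{ss}^2$ is needed; in your version you must perform that absorption and keep track of the extra $2t(\beta_s^2+\gamma_s^2)$ produced by $\partial_t(t^2)$, which forces $A=A(\tau)$ rather than a universal constant.

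One point to correct in your setup: the displayed ``schematic'' equations for $\beta_t$ and $\gamma_t$ are obtained by differentiating (\ref{xcf uvw}) in $s$ \emph{as if} $\partial_s$ and $\partial_t$ commuted, which they do not (the arclength coordinate $s$ evolves in time, as seen from the nontrivial $w$-equation). Moreover your zeroth-order coefficient $\alpha_{ss}+2v_s\alpha_s$ is not actually zeroth order: while $\alpha_s=\beta u_s+\gamma v_s-\alpha(u_s+v_s)$ is a pure polynomial in $(\beta,\gamma,u_s,v_s)$, its derivative $\alpha_{ss}$ contains $\beta_s u_s+\gamma_s v_s$, so some of what you label as zeroth order must be moved into the drift coefficients. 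The paper sidesteps this by starting from the already-derived closed-form curvature evolution equations~(\ref{curvev}), whose zeroth-order coefficients $2[u_s^2(\alpha-\beta)-\alpha\gamma]$ and $2[v_s^2(\alpha-\gamma)-\alpha\beta]$ are bounded through the core by Lemma~\ref{alphabetau_s}. This is not a fatal flaw in your outline since the schematic structure is right, but if you expand it into a full proof you should work directly from (\ref{curvev}) and Lemma~\ref{curvevcore} rather than re-derive a differentiated system. Your handling of the two boundaries (Dirichlet data at $s=s_1(t)$, parity/regularity at the core) matches the paper's level of rigor in its own sketch.
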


We use these estimates together with the fact that the curvatures stay bounded
to show, in the usual way, long-time existence of solutions to
equations~(\ref{xcf fgh}).

\begin{theorem}
\label{long-long-time}The solution $(u,v,w)$ to \textsc{xcf} on the solid
torus exists for all time.
\end{theorem}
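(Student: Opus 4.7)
The plan is a standard continuation argument, using the curvature bounds (presumably established elsewhere in the paper) together with Theorem~\ref{deriv ests} to upgrade to uniform higher regularity, and then applying short-time existence (Theorem~\ref{ste}) to extend past any finite maximal time.

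Suppose for contradiction that there is a maximal time of existence $T<\infty$. The first step is to invoke the a priori bounds on the sectional curvatures $\alpha,\beta,\gamma$ (the curvature preservation and pinching results summarized in \S\ref{Summary}), which imply in particular that $\beta(\cdot,t),\gamma(\cdot,t)\le K$ uniformly on $[0,T)$ for some constant $K$ depending only on the initial data and the prescribed boundary conditions. One also needs $C^{0}$ bounds on $u,v$ themselves; these follow by combining the Dirichlet boundary condition (\ref{boundary conditions}), the ODE structure $u_t=\alpha\gamma$, $v_t=\alpha\beta$, and the curvature bounds.

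The second step is to feed these curvature bounds into Theorem~\ref{deriv ests} to extract, for each $m\in\mathbb{N}$ and each $\tau\in(0,T)$, uniform bounds
\[
\left\lvert \partial_s^{m}\beta(\cdot,t)\right\rvert+\left\lvert \partial_s^{m}\gamma(\cdot,t)\right\rvert\le\frac{C_m}{\tau^{m}}\qquad\text{for }t\in[\tau,T).
\]
Since $\beta=-(v_{ss}+v_s^{2})$ and $\gamma=-(u_{ss}+u_s^{2})$, and since $u_s,v_s$ are controlled by $\alpha=u_sv_s$ together with the boundary values (integrating from $s=s_0$ inward using the fixed-gauge parameterization by arclength), this converts into uniform bounds on all spatial derivatives of $u$ and $v$ on $[\tau,T)$. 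The evolution equations (\ref{xcf uvw}) then yield matching bounds on time derivatives, and the equation for $w$ gives corresponding control on $w$.

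The third step is to conclude that $(u,v,w)(\cdot,t)$ converges as $t\nearrow T$ in $C^{2+2\theta}(\overline{\Omega},\mathbb{C}^{N})$ to a limit $(u,v,w)(\cdot,T)$ satisfying the same Dirichlet data and whose associated metric is still strictly negatively curved; in particular the parabolicity hypothesis $f_sg_s/(fg)>0$ required in Remark~\ref{Parabolic} persists at $t=T$. Applying the short-time existence result (Theorem~\ref{ste}) with initial datum $(u,v,w)(\cdot,T)$ produces an extension to $[T,T+\tau_0)$, contradicting the maximality of $T$. Hence the solution exists for all $t\ge 0$.

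The main obstacle is really step one: obtaining the uniform curvature bounds in the first place, because this is where the fully nonlinear, weakly parabolic nature of \textsc{xcf} bites hardest and where the Dirichlet data and negative-curvature structure must be used. Granting the pointwise curvature bounds, the derivative estimates of Theorem~\ref{deriv ests} function as the \textsc{xcf} analogue of Shi's derivative estimates for Ricci flow, and the remainder of the continuation argument is essentially routine.
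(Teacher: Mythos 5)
Your proposal is correct and follows essentially the same route as the paper: uniform curvature bounds (Parts~(2) and (3) of Theorem~\ref{curvature estimates}), the Bernstein--Bando--Shi derivative estimates of Theorem~\ref{deriv ests}, and a continuation argument via the short-time existence Theorem~\ref{ste}. The paper packages the middle steps as a standalone ``long-time existence or curvature blow-up'' criterion (Theorem~\ref{lte}, proved via an integral formulation for $f(\cdot,T)$ and $g(\cdot,T)$) and then deduces Theorem~\ref{long-long-time} as a corollary, whereas you inline the same reasoning.
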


\subsection{Curvature estimates}

One can compute the following evolution equations for the sectional
curvatures:
\begin{subequations}
\label{curvev}%
\begin{align}
\alpha_{t}  &  =\alpha\alpha_{ss}+[2\alpha(u_{s}+v_{s})+\beta u_{s}+\gamma
v_{s}]\alpha_{s}+2\alpha(\alpha^{2}-2\beta\gamma)\\
\beta_{t}  &  =\alpha\beta_{ss}+(3\beta u_{s}+\gamma v_{s}-2\alpha u_{s}%
)\beta_{s}+2\beta\lbrack u_{s}^{2}(\alpha-\beta)-\alpha\gamma]\\
\gamma_{t}  &  =\alpha\gamma_{ss}+(3\gamma v_{s}+\beta u_{s}-2\alpha
v_{s})\gamma_{s}+2\gamma\lbrack v_{s}^{2}(\alpha-\gamma)-\alpha\beta].
\end{align}
This system is well behaved as long as $\alpha>0$, since this makes
$\alpha\frac{\partial^{2}}{\partial s^{2}}$ an elliptic operator. In
particular, a maximum principle then applies at interior points. We say
$s(r,t)$ is an \emph{interior point} if $r\in(0,1)$.

Our goal, in the context of \S \ref{Description}, is to use these evolution
equations to show convergence to hyperbolic. Here we collect various estimates
that represent progress toward that goal. Define%
\end{subequations}
\[
K_{0}=\sup_{M\times\{0\}}\{\alpha,\beta,\gamma\}\qquad\text{and}\qquad
L_{0}=\inf_{M\times\{0\}}\{\alpha,\beta,\gamma\}.
\]

\begin{theorem}
\label{curvature estimates}~

\begin{enumerate}
\item For as long as a solution exists, $\alpha>0$. Thus the \textsc{xcf}
operator remains elliptic.

\item The quantities $\alpha,\beta,$ and $\gamma$ are bounded from above by
$K_{0}$.

\item The quantities $\alpha,\beta$, and $\gamma$ are bounded from below by
$L_{0}e^{-4K_{0}^{2}t}$.

\item Negative sectional curvature is preserved.

\item Over the course of the \textsc{xcf} evolution, one has
\[
\alpha\geq\frac{L_{0}}{4K_{0}L_{0}t+1}.
\]

\end{enumerate}
\end{theorem}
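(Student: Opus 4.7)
All five claims are maximum-principle arguments applied to the reaction-diffusion system (\ref{curvev}). The leading second-order term in each equation is $\alpha\,\partial_s^2$, so establishing (1) is logically prior: it makes the system strictly parabolic on the interior. The Dirichlet conditions (\ref{boundary conditions}) prescribe a homothetically shrinking hyperbolic metric on the boundary, whose principal sectional curvatures behave as $\alpha,\beta,\gamma\sim(1+4t)^{-1/2}$ and hence remain both strictly positive and bounded by their initial values on any finite time interval. Consequently I may, throughout, argue at interior spatial extrema, with the boundary data supplying the correct side condition.

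For (1), I would argue by contradiction: if $\alpha$ first vanishes at an interior point $(s^{*},t^{*})$, then $\alpha_{s}(s^{*},t^{*})=0$ and $\alpha_{ss}(s^{*},t^{*})\geq 0$, so (\ref{curvev}a) gives $\alpha_{t}\geq 2\alpha(\alpha^{2}-2\beta\gamma)=0$ there, contradicting the vanishing. The strong parabolic maximum principle then upgrades nonnegativity to strict positivity. For (2), the natural plan is to apply the scalar maximum principle to $M(t):=\max_{x}\max\{\alpha,\beta,\gamma\}$. If the spatial maximum is realised at an interior point by $\beta$ with $\beta\geq\alpha,\gamma$, then the reaction $2\beta[u_{s}^{2}(\alpha-\beta)-\alpha\gamma]$ in (\ref{curvev}b) is manifestly nonpositive; the case of $\gamma$ is symmetric. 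The delicate case is a maximum realised by $\alpha$: the reaction $2\alpha(\alpha^{2}-2\beta\gamma)$ need not have a favourable sign, and this is the principal obstacle. I would attempt to overcome it either by a Hamilton-style tensor maximum principle applied to the curvature operator (whose eigenvalues are $\alpha,\beta,\gamma$), or by the scalar maximum principle for a carefully tuned auxiliary quantity such as $\alpha+c(\beta+\gamma)$, chosen so that the indefinite cubic term is absorbed by the guaranteed-negative cross terms from the other equations.

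Granted (2), parts (3) and (4) are direct. At an interior minimum of $\alpha$ the gradient terms of (\ref{curvev}a) vanish, $\alpha\alpha_{ss}\geq 0$, and the reaction is bounded below by $-4\alpha K_{0}^{2}$ since $\beta\gamma\leq K_{0}^{2}$; ODE comparison yields $\alpha\geq L_{0}e^{-4K_{0}^{2}t}$. Analogous computations at minima of $\beta$ and $\gamma$, using the reactions $-\alpha\gamma$ and $-\alpha\beta$ respectively, produce the same exponential lower bound, and (4) is then immediate from (1)--(3). The polynomial improvement (5) requires the sharper estimate $\beta\gamma\leq K_{0}\alpha$ at the infimum of $\alpha$, which would upgrade the previous inequality to $\alpha_{t}\geq -4K_{0}\alpha^{2}$ and, integrating $\tfrac{d}{dt}(1/\alpha_{\min})\leq 4K_{0}$, produce the stated bound. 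A plausible route is to exploit the hypothesis $\alpha=\beta=\gamma$ at the core together with the rotational symmetry: if one can show that the minimum of $\alpha$ is attained on the core (where $\beta=\alpha$ is preserved by symmetry), then $\beta\gamma=\alpha\gamma\leq K_{0}\alpha$ is automatic.
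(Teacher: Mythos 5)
Your identification of the delicate case in part~(2) --- an interior maximum realized by $\alpha$, where the reaction $2\alpha(\alpha^{2}-2\beta\gamma)$ can have the wrong sign --- is exactly right, but the two remedies you sketch (a Hamilton-style tensor maximum principle, or an auxiliary quantity $\alpha+c(\beta+\gamma)$) are not what the paper uses, and the indefinite cubic $2\alpha^3$ has no counterweight of matching degree in the $\beta$- or $\gamma$-reactions for the second idea to exploit. The missing ingredient is an algebraic identity special to this ansatz: since $\alpha=u_s v_s$, $\beta=v_{ss}+v_s^2$, $\gamma=u_{ss}+u_s^2$, one has $\alpha_s = \beta u_s + \gamma v_s - \alpha(u_s+v_s)$, which allows one to move part of the transport term into the reaction and rewrite (\ref{curvev}a) as
\[
\alpha_t = \alpha\alpha_{ss} + [\beta u_s + \gamma v_s]\,\alpha_s - 2\alpha\bigl[u_s^2(\alpha-\beta)+v_s^2(\alpha-\gamma)+(\alpha-\beta)(\alpha-\gamma)+\beta\gamma\bigr].
\]
A bootstrap step (Lemma~\ref{betagammaub}: $\beta,\gamma\le\max\{K_0,\sup\alpha\}$) then gives $\alpha\ge\beta,\gamma$ at the first touching time of the barrier $\alpha_\delta=\alpha-\delta(t+1)$, at which point every term in the bracket is nonnegative and the maximum principle closes. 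The same device --- two further rewritings (\ref{alphaevs}) of the $\alpha$-equation isolating $4u_s^2\beta(\beta-\alpha)$ and $4v_s^2\gamma(\gamma-\alpha)$, plus a case split on the signs of $\alpha-\beta$ and $\alpha-\gamma$ --- gives part~(5). Your alternative route to~(5), forcing the minimum of $\alpha$ onto the core, would require a separate argument that the paper neither makes nor needs.

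Two further gaps. First, your argument for part~(1) does not close: at a first interior zero of $\alpha$ the PDE gives $\alpha_t = 0$ (diffusion $\alpha\alpha_{ss}$, transport, and reaction all vanish there), while first vanishing gives only $\alpha_t\le0$, so there is no contradiction; and the strong maximum principle cannot be invoked directly because the diffusion coefficient is $\alpha$ itself and degenerates. The paper derives positivity of $\alpha$ as a corollary of the quantitative bound in part~(3): Lemma~\ref{alphalb} runs a barrier $A(t)=\alpha_{\min}(0)e^{-4K^2t}-\delta$ (with $K=\sup\{\beta,\gamma\}$ on the time interval in question, finite by compactness), keeping $\alpha$ uniformly away from zero. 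Second, every maximum-principle argument here must be supplemented by a separate analysis at the core $s=0$, where $u_s=f_s/f$ blows up and the coefficients of (\ref{curvev}) are singular; the paper derives regularized core evolution equations by l'H\^{o}pital (Lemma~\ref{curvevcore}) and verifies the sign conditions there case by case. Your proposal addresses interior and boundary but is silent on the core, which is an essential third case.
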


\medskip

Finally, we follow \cite{ChowHam} to obtain evidence that our solution
converges to a hyperbolic metric in an integral sense. We use their notation
and define
\begin{equation}
J=\int_{M}(\frac{P}{3}-(\det P)^{\frac{1}{3}})\,\mathrm{dV}\,, \label{J defn}%
\end{equation}
where $P=g_{ij}P^{ij}$. Notice the integrand is nonnegative (by the
arithmetic-geometric mean inequality) and is identically zero if and only if
$g_{ij}$ has constant curvature. Hamilton and Chow's theorem does not directly
apply to our setting, as we have a manifold with boundary. However, we are
able to prove the analogous theorem.

\begin{theorem}
\label{mon of J}Under \textsc{xcf} of a rotationally-symmetric metric on the
solid torus, one has
\begin{equation}
\frac{dJ}{dt}\leq0.
\end{equation}

\end{theorem}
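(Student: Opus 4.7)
The plan is to adapt the Chow--Hamilton monotonicity argument from the closed setting, carefully tracking the boundary contributions that arise because $M^3 = D^2 \times S^1$ has boundary.

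First, I reduce to a one-dimensional integral. In the orthonormal frame $e_1 = f^{-1}\partial_\mu$, $e_2 = g^{-1}\partial_\lambda$, $e_3 = \partial_s$, a direct computation shows the Einstein tensor $P^{ij}$ has eigenvalues $(\beta, \gamma, \alpha)$, so $P = \alpha+\beta+\gamma$ and $\det P = \alpha\beta\gamma$. By rotational symmetry,
\[
J(t) = (2\pi)^2 \int_0^{s_0(t)} \phi(\alpha,\beta,\gamma)\, fg\, ds, \qquad \phi(\alpha,\beta,\gamma) := \tfrac{1}{3}(\alpha+\beta+\gamma) - (\alpha\beta\gamma)^{1/3}.
\]
The function $\phi$ is non-negative and convex on the positive octant (by AM--GM and concavity of the geometric mean), with $\phi = 0$ and $\nabla\phi = 0$ exactly on the diagonal $\alpha = \beta = \gamma$.

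Second, I differentiate under the integral sign using (\ref{curvev}) and the evolution $\partial_t(fg) = \alpha(\beta+\gamma)\,fg$ from (\ref{xcf fgh}). The leading second-derivative contribution can be rewritten as
\[
\int_0^{s_0} \alpha\bigl(\phi_\alpha\,\alpha_{ss} + \phi_\beta\,\beta_{ss} + \phi_\gamma\,\gamma_{ss}\bigr)\, fg\, ds = \int_0^{s_0} \alpha\,\phi_{ss}\,fg\,ds - \int_0^{s_0} \alpha\, Q\, fg\, ds,
\]
where $Q := D^2\phi\bigl((\alpha_s,\beta_s,\gamma_s),(\alpha_s,\beta_s,\gamma_s)\bigr) \geq 0$ by convexity of $\phi$. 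I then integrate the first term by parts twice and recombine what remains with the first-order drift terms and zeroth-order reaction terms of (\ref{curvev}), together with the volume-element contribution, hoping to produce a manifestly non-positive bulk integrand by the same algebraic manipulation as in the closed case.

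Third, I argue that all boundary terms from the integration by parts vanish. At $s=0$, $f(0)=0$ and the core regularity conditions (\ref{core}) force $\alpha = \beta = \gamma$, so both $\phi$ and the factor $fg$ vanish. At $s=s_0(t)$, the Dirichlet conditions (\ref{boundary conditions}) give $u_t = v_t = w_t = 1/(1+4t)$ on the boundary; combining with $u_t = \alpha\gamma$, $v_t = \alpha\beta$, $w_t = \beta\gamma$ yields $\alpha = \beta = \gamma = (1+4t)^{-1/2}$ at $s = s_0(t)$ for all $t$. Hence $\phi$, $\nabla\phi$, and consequently $\phi_s = \phi_\alpha\alpha_s + \phi_\beta\beta_s + \phi_\gamma\gamma_s$ all vanish at the outer boundary, so every boundary contribution drops out. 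The same vanishing kills the moving-endpoint term $\phi(s_0(t),t)\,fg(s_0(t),t)\,\dot{s}_0(t)$.

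The main obstacle is the bookkeeping in the second step: organizing the bulk integrand, after integration by parts, into a manifestly non-positive expression. For rotationally symmetric metrics this reduces to a one-variable identity, and the convexity of $\phi$ together with the positivity of $\alpha$ ensured by Theorem~\ref{curvature estimates} should make this tractable, but verifying the cancellations in detail is the most technical part of the proof.
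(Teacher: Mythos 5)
Your proof takes a genuinely different route from the paper. The paper follows Chow--Hamilton directly: it uses their evolution equation for the Einstein tensor (equation~(\ref{ev of P})), computes $\frac{d}{dt}\int P\,\mathrm{ds}$ and $\frac{d}{dt}\int(\det P)^{1/3}\,\mathrm{ds}$ separately, integrates by parts to produce the manifestly non-positive bulk quantity $-\frac{1}{3}\int\left(\frac{1}{2}|E^{ijk}-E^{jik}|^2_V + \frac{1}{6}|T^i|^2_V\right)(\det P)^{1/3}\,\mathrm{ds}$, and explicitly computes the boundary terms and shows they cancel. You instead write $J$ as the integral of a convex scalar $\phi(\alpha,\beta,\gamma)$ against the volume element and try to deduce monotonicity from convexity and the sectional-curvature equations~(\ref{curvev}). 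The spirit is the same, but the bookkeeping is entirely different, and two steps do not go through as written.

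First, your core boundary claim is wrong. The regularity conditions~(\ref{core}) force only $\alpha=\beta$ at $s=0$, not $\alpha=\beta=\gamma$. The paper's hypothesis that the initial metric satisfies $\alpha=\beta=\gamma$ at the core is a condition on the data at $t=0$ only; by Lemma~\ref{curvevcore} the core evolutions of $\alpha$ and $\gamma$ have different forms, so equality of all three is not preserved. Since $\phi(\alpha,\alpha,\gamma)=\frac{1}{3}(2\alpha+\gamma)-(\alpha^2\gamma)^{1/3}=0$ if and only if $\alpha=\gamma$, the integrand $\phi$ does \emph{not} generically vanish at the core for $t>0$. Your double integration by parts of $\int\alpha\phi_{ss}\,fg\,ds$ then leaves the boundary contribution $-(\alpha fg)_s\,\phi\big|_{s=0}=-\alpha f_s g\,\phi(\alpha,\alpha,\gamma)$, which does not vanish (the factor $\alpha f_s g$ is strictly positive). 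The paper's core boundary terms cancel for a different reason: $\alpha=\beta$ plus $f(0)=0$ plus the evenness of the curvatures and of $g$ (so $\beta_s,\gamma_s,g_s$ all vanish there) make each term go to zero individually. You would need to reconstruct that argument in your framework rather than appeal to $\phi(0,t)=0$.

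Second, the bulk estimate is only hoped for, not demonstrated. Convexity of $\phi$ handles the $Q$-term, but after integrating by parts, the remaining pieces --- $\int(\alpha fg)_{ss}\,\phi$, the first-order drift contributions, the reaction terms from~(\ref{curvev}), and the volume-element term from $\partial_t(fg)=\alpha(\beta+\gamma)fg$ --- are not individually signed, and it is precisely their non-trivial recombination (via the $E^{ijk}$ and $T^i$ identities) that makes the Chow--Hamilton argument work. Asserting that \textquotedblleft the same algebraic manipulation as in the closed case\textquotedblright\ applies after you have reorganized everything around $\phi(\alpha,\beta,\gamma)$ and its Hessian is not automatic; the decomposition you propose (isolating the $\alpha\phi_{ss}$ contribution and IBP-ing it twice) is not the decomposition they use, so the identity would have to be re-derived, and you have not done so. In its present form the proposal reduces to \textquotedblleft the closed-manifold computation should work with boundary terms that vanish,\textquotedblright\ and the second half of that sentence is false at the core.

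One point in your favor: your justification of $\alpha=\beta=\gamma$ at $s=s_0(t)$ --- by combining the Dirichlet data~(\ref{boundary conditions}) with $u_t=\alpha\gamma$, $v_t=\alpha\beta$, $w_t=\beta\gamma$ to force $\alpha\gamma=\alpha\beta=\beta\gamma=(1+4t)^{-1}$ --- is a clean argument for a fact the paper asserts without explanation.
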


\section{Collected proofs\label{CollectedProofs}}

\subsection{Proofs of long-time existence and uniqueness}

We first use the theory of \cite{ac} for short-time existence and regularity
of solutions to fully nonlinear parabolic systems. Consider the second-order system%

\begin{gather*}
\partial_{t}\vec{X}=F(t,x,\vec{X},D\vec{X},D^{2}\vec{X}%
)\mbox{ for $(t,x)$ in $[0,T]\times \overline{\Omega}$}\\
H(t,x,\vec{X},D\vec{X}%
)=0\mbox{ for $(t,x)$ in $[0,T]\times \partial \Omega$}\\
\vec{X}(0,x)=\phi(x)\mbox{ for $x$ in $\overline{\Omega}$}
\end{gather*}
\noindent where $T>0$, $\Omega$ is a bounded, smooth domain of $\mathbb{R}%
^{n}$, and $F,H$ are smooth $\mathbb{C}^{N}$-valued functions.

The short-time existence theory requires the following hypotheses:

\noindent\textsc{Regularity: }The boundary of $\Omega$ and the functions
$F,H,\phi$ satisfy
\[
\partial\Omega\in C^{2+2\theta},\phi\in C^{2+2\theta}(\overline{\Omega
},\mathbb{C}^{N})
\]%
\[
F\in C^{2}(\Gamma,\mathbb{C}^{N}),
\]%
\[
H\in C^{3}(\Gamma^{\prime N}),(\theta\in(0,1/2)),
\]
\noindent where $\Gamma:=[0,\infty)\times\overline{\Omega}\times\mathbb{C}%
^{N}\times\mathbb{C}^{nN}\times\mathbb{C}^{n^{2}N}$ and $\Gamma^{\prime
}:=[0,\infty)\times\overline{\Omega}\times\mathbb{C}^{N}\times\mathbb{C}^{nN}$.

\noindent\textsc{Compatibility: }The initial datum $\phi$ satisfies
\[
H(0,x,\phi(x),D\phi(x))=0,x\in\partial\Omega.
\]

If these hypotheses are satisfied, one has:

\begin{theorem}
\label{FullyNonlinear}There exists $\tau_{0}>0$ such that the fully nonlinear
parabolic system has a unique solution $\vec{X}\in E_{\tau_{0}}$, where
\[
E_{\tau}:=C^{1+\theta}\left(  [0,\tau],C^{0}(\overline{\Omega},C^{N})\right)
\cap L^{\infty}\left(  [0,\tau),C^{2+2\theta}(\overline{\Omega},C^{N})\right)
,
\]
\noindent with the norm
\[
||\vec{X}||_{E_{\tau}}:=\sup_{t\in\lbrack0,\tau]}||\vec{X}||_{C^{0}}%
+[D_{t}\vec{X}]_{C^{\theta}}+\sup_{t\in\lbrack0,\tau]}[D_{x}^{2}\vec
{X}]_{C^{2\theta}}.
\]

\end{theorem}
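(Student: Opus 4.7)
The plan is to invoke the standard strategy for short-time existence of fully nonlinear parabolic systems via linearization and a contraction mapping argument in the H\"{o}lder scale $E_\tau$; this is precisely the approach developed in \cite{ac}. First I would linearize $F$ around the constant-in-time extension of the initial datum $\phi(x)$. This produces a linear second-order system whose coefficients inherit $C^\theta$ regularity in time and $C^{2\theta}$ regularity in space from the hypotheses $F\in C^{2}(\Gamma)$ and $\phi\in C^{2+2\theta}(\overline{\Omega},\mathbb{C}^N)$. A parallel linearization of $H$ at $(\phi,D\phi)$ produces a boundary operator of the appropriate H\"{o}lder class, and the compatibility assumption $H(0,x,\phi,D\phi)=0$ is exactly what is needed to rule out a corner singularity obstructing the construction at $t=0$.

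Next I would solve the linearized problem using classical parabolic Schauder theory. Two structural ingredients are required: the principal symbol of $F$ in its dependence on $D^2\vec{X}$ must satisfy a parabolicity (Legendre--Hadamard) condition at $\phi$, and the linearized boundary operator must satisfy the Lopatinskii--Shapiro complementing condition relative to $F$. Granted these, the standard $C^{2+2\theta,1+\theta}$ parabolic Schauder theory produces an isomorphism from the subspace of $E_\tau$ compatible at $t=0$ onto the corresponding space of inhomogeneities, together with norm estimates whose implicit constants are uniform for $t\in[0,\tau]$.

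I would then recast the full nonlinear problem as a fixed point equation $\vec{X}=\Phi(\vec{X})$, where $\Phi$ inverts the linearized operator against the quadratic-and-higher remainders $F(\vec{X})-F(\phi)-DF(\phi)(\vec{X}-\phi)$ and the analogous quantity for $H$. Because these remainders vanish to second order at the reference solution, and because short time intervals contribute an extra factor $\tau^{\delta}$ for some $\delta>0$ in the relevant H\"{o}lder norms, $\Phi$ becomes a contraction on a small ball around the reference solution in $E_{\tau_0}$ as soon as $\tau_0$ is chosen sufficiently small. The unique fixed point of $\Phi$ is the desired solution $\vec{X}\in E_{\tau_0}$, and uniqueness in $E_{\tau_0}$ follows immediately from the same contraction estimate.

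The main obstacle is the verification of the Lopatinskii--Shapiro condition for the boundary operator $H$ relative to the principal symbol of $F$: this is the one step which is genuinely system-specific rather than a formal piece of abstract parabolic theory, because it couples the structure of $H$ with the leading-order behaviour of $F$. For the eventual application to cross curvature flow in Theorem~\ref{ste} it will be automatic, since the prescribed data in (\ref{boundary conditions}) are Dirichlet and Remark~\ref{Parabolic} guarantees that the system (\ref{xcf uvw}) is strictly parabolic in our arclength gauge; at the level of the abstract statement, however, this complementing condition must either be included among the hypotheses or verified case by case before Schauder theory can be applied.
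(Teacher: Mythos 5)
The paper does not actually prove Theorem~\ref{FullyNonlinear}: it is imported wholesale as a citation to Acquistapace--Terreni \cite{ac}, with the paper only restating the relevant hypotheses (\textsc{Regularity} and \textsc{Compatibility}) and then invoking the conclusion. So there is no ``paper's own proof'' to compare against. That said, your sketch is a faithful reconstruction of the standard linearize-then-contract paradigm that underlies the result in \cite{ac}: linearization of $F$ and $H$ at the time-constant extension of the initial datum, parabolic Schauder theory for the resulting linear system, and a Banach fixed-point argument in $E_{\tau}$ exploiting the extra factor of $\tau^{\delta}$ gained over short time intervals. Your remark that the Lopatinski\u{\i}--Shapiro (complementing) condition is an unavoidable structural hypothesis is a genuinely useful observation: the paper's paraphrase of the hypotheses from \cite{ac} omits both it and the parabolicity requirement, presumably because --- as you note --- these are trivially satisfied in the intended application (Dirichlet boundary data plus the strict parabolicity secured by the arclength gauge, cf.~Remark~\ref{Parabolic}). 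In short, you have supplied a reasonable internal argument for something the paper treats as a black box, and you have flagged a real (if minor) imprecision in how the paper states its imported hypotheses.
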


\begin{proof}
[Proof of Theorem~\ref{ste}]As explained in Remark~\ref{Parabolic}, the
parameterization by arclength $s$, as defined in (\ref{DefineArcLength}),
makes \textsc{xcf} a strictly parabolic system; furthermore, this
parameterization respects the stated Dirichlet boundary conditions.

Thus we may apply Theorem~\ref{FullyNonlinear} to the system of equations
(\ref{xcf uvw}) with boundary conditions given by (\ref{boundary conditions}).
Clearly, the regularity hypotheses for $F$ and $H$ are satisfied, because our
initial data are $C^{\infty}$. Moreover, one has $u(x,0)=\log{f(1,0)}$,
$v(x,0)=\log{g(1,0)}$, and $w(x,0)=0$ for $x$ on the boundary torus, so that
the compatibility hypothesis is likewise met. Hence Theorem~\ref{ste} follows.
\end{proof}

\medskip

The proof of long-time existence of solutions to the flow is standard once we
obtain \emph{a priori} derivative estimates. So we will first outline the
proof of Theorem~\ref{deriv ests}. We prove this using estimates of
Bernstein--Bando--Shi type.

\begin{proof}
[Sketch of proof of Theorem~\ref{deriv ests}]Define $M_{+}:=t\beta_{s}%
+\lambda_{1}\beta^{2}+\lambda_{2}\gamma^{2}$, where $\lambda_{1}$ and
$\lambda_{2}$ are two constants to be chosen later. (We will see that they
depend only on the initial conditions and the length of the time interval
under consideration.) When one computes the evolution of $M_{+}$, one sees
that it has the following structure:%
\begin{align*}
\frac{\partial M_{+}}{\partial t}  &  =\alpha(M_{+})_{ss}+(M_{+})_{s}%
F_{1}+3t(\beta_{s})^{2}u_{s}+t\beta_{s}\gamma_{s}v_{s}\\
&  +t\beta_{s}F_{2}+t\gamma_{s}F_{3}-2\alpha\lambda_{1}(\beta_{s})^{2}%
-2\alpha\lambda_{2}(\gamma_{s})^{2},
\end{align*}
where $F_{i}=F_{i}(\alpha,\beta,\gamma)$ are polynomials depending on the curvatures.

Now let $\tau>0$ be some real number, and consider the evolution equation on
$[0,\tau]$. Using bounds on the curvatures and multiple applications of
Cauchy--Schwarz, one can show that
\[
\frac{\partial M_{+}}{\partial t}\leq\alpha(M_{+})_{ss}+(M_{+})_{s}%
F_{1}+(\beta_{s})^{2}(F_{2}-2K\lambda_{1})+(\gamma_{s})^{2}(F_{3}%
-2K\lambda_{2})+F_{4},
\]
where $K>0$ is the lower bound for $\alpha$ on $[0,\tau]$, and the $F_{i}$
(which may differ from above) also depend on $\tau$.

Since all of the $F_{i}$ are bounded from above, we choose $\lambda_{1}$ and
$\lambda_{2}$ so that the terms $F_{2}-2K\lambda_{1}$ and $F_{3}-2K\lambda
_{2}$ are both negative. Then we have%
\[
(M_{+})_{t}\leq\alpha(M_{+})_{ss}+(M_{+})_{s}F_{1}+C,
\]
where $C$ is a constant depending only on the initial data. Thus by the
parabolic maximum principle, we have
\[
\sup_{x\in M^{3}}M_{+}(x,t)\leq Ct+D\leq C\tau+D,
\]
for all $t\in\lbrack0,\tau]$, where again $D$ is a constant just depending on
the initial data. Hence, on this bounded time interval,
\[
\beta_{s}\leq\frac{C}{t}.
\]

Notice that in an analogous fashion, we can obtain a lower bound for
$\beta_{s}$ on a bounded time interval by considering $M_{-}:=-t\beta
_{s}-\lambda_{1}\beta^{2}-\lambda_{2}\gamma^{2}$. Thus we obtain our desired
result that $|\beta_{s}|\leq\frac{C}{t}$ on $[0,\tau]$, where $C$ depends only
on the initial data and on $\tau$.

Similarly, to estimate $\gamma_{s}$, one considers $N_{+}:=t\gamma_{s}%
+\lambda_{1}\beta^{2}+\gamma^{2}$ along with $N_{-}:=-t\gamma_{s}-\lambda
_{1}\beta^{2}-\lambda_{2}\gamma^{2}$ and shows that in fact $|\gamma_{s}%
|\leq\frac{C}{t}$, for $C$ as above.

We then use induction to show that higher-order estimates $|\frac{\partial
^{m}\beta}{\partial s^{m}}|,|\frac{\partial^{m}\gamma}{\partial s^{m}}%
|\leq\frac{C}{t^{m}}$ hold on $[0,\tau]$ for any $m>0$.
\end{proof}

In order to show long-time existence of solutions to the system given by
(\ref{xcf fgh}), we will prove a theorem analogous to that for Ricci flow.

\begin{theorem}
\label{lte} Let $G_{0}$ be a metric on the solid torus $M^{3}$ given by
(\ref{G}). Then unnormalized cross curvature flow has a unique solution $G(t)$
such that $G(0)=G_{0}$. This solution exists on a maximal time interval
$[0,T)$. If $T\leq\infty$, then at least one of
\[
\lim_{t\rightarrow T}(\sup_{s\in M^{3}}|\alpha(s,t)|),\qquad\lim_{t\rightarrow
T}(\sup_{s\in M^{3}}|\beta(s,t)|),\qquad\lim_{t\rightarrow T}(\sup_{s\in
M^{3}}|\gamma(s,t)|)
\]
is infinite.
\end{theorem}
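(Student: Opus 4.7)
The plan is to follow the standard contradiction argument familiar from Ricci flow. Theorem~\ref{ste} provides a unique solution on some short interval, so let $T \in (0, \infty]$ be the supremum of times for which a smooth rotationally symmetric solution $G(t)$ of (\ref{xcf fgh}) with the given boundary conditions exists. Suppose toward a contradiction that $T < \infty$ yet
\[
\sup_{M \times [0,T)} \max\{|\alpha|,|\beta|,|\gamma|\} \le K < \infty.
\]
The goal is to show that under this assumption, $G(t)$ extends smoothly to $t = T$, after which a second application of Theorem~\ref{ste} with $G(T)$ as initial datum produces an extension past $T$, contradicting maximality.

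First, integrate the log-metric evolution (\ref{xcf uvw}): since $|u_t| = \alpha\gamma \le K^2$, and similarly for $v_t, w_t$, one obtains uniform $C^0$ bounds on $u,v,w$, so that $G(t)$ is uniformly equivalent to $G_0$ on $M \times [0,T)$ and stays bounded away from degeneracy. Next, apply Theorem~\ref{deriv ests} on any interval $[\delta, T) \subset (0, T)$ to obtain $|\partial_s^m \beta|,\, |\partial_s^m \gamma| \le C_m$ uniformly. A parallel Bernstein--Bando--Shi argument applied to the evolution equation (\ref{curvev}a) for $\alpha$ (which is uniformly parabolic since $\alpha$ is bounded above by $K_0$ and, by Theorem~\ref{curvature estimates}(5), below by $L_0/(4K_0 L_0 T + 1)$) yields the analogous bounds $|\partial_s^m \alpha| \le C_m$ on $M \times [\delta, T)$.

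To pass from derivative bounds on the curvatures to derivative bounds on the metric, observe that at each fixed $t$ the component functions satisfy the linear ODEs $f_{ss} = \gamma f$ and $g_{ss} = \beta g$, supplemented by the regularity conditions at the core $f(0,t) = 0$, $f_s(0,t) = 2\pi h(0,t)$, $g_s(0,t) = 0$ (cf.\ (\ref{core})), together with the prescribed Dirichlet values at $s = s_0$. Solving these ODEs with coefficients that are uniformly $C^m$ in $s$ produces uniform $C^{m+2}$ bounds on $f, g$ (and hence on $u, v$) on $M \times [\delta, T)$. The evolution equations (\ref{xcf uvw}) then promote these to bounds on mixed space-time derivatives of all orders. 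By Arzel\`a--Ascoli, $(u, v, w)(\cdot, t)$ converges as $t \to T$ in $C^\infty(M)$ to a smooth limit $G(T)$, which still satisfies the regularity conditions at the core, the smoothly time-extended boundary conditions at $s = s_0$, and the strict negativity of sectional curvatures by Theorem~\ref{curvature estimates}(5). Theorem~\ref{ste} now extends the solution to $[T, T + \tau_1]$, contradicting the definition of $T$.

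The main obstacle is the derivative bootstrap in the middle step: Theorem~\ref{deriv ests} does not a priori control derivatives of $\alpha$, and although the Bernstein technique should adapt to its evolution equation, one must verify uniform parabolicity of (\ref{curvev}a) on $[0,T)$, which is precisely why Theorem~\ref{curvature estimates}(5) is essential here. A secondary care point is extracting the limit at $t = T$ while preserving the regularity conditions at the core: these conditions are preserved along the flow because they are a consequence of smoothness together with rotational symmetry, both of which persist in the limit.
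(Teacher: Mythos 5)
Your proposal is correct and reaches the same conclusion (extend the solution to $t=T$, then reapply short-time existence) but by a genuinely different route. The paper defines the limit metric by integrating the evolution equations in time,
\[
f(s,T)=f(s,\tau)+\int_{\tau}^{T}\alpha\gamma f\,\mathrm{dt},\qquad
g(s,T)=g(s,\tau)+\int_{\tau}^{T}\alpha\beta g\,\mathrm{dt},
\]
and differentiates these formulas in $s$, invoking Theorem~\ref{deriv ests} to control the integrands. You instead extract spatial regularity of $f,g$ at each fixed $t$ from the linear ODEs $f_{ss}=\gamma f$, $g_{ss}=\beta g$, which is arguably cleaner here because those ODEs involve only $\beta,\gamma$ --- precisely the quantities Theorem~\ref{deriv ests} controls --- and bootstrap from there. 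You correctly flag a gap the paper's proof elides: the paper's integral formulas require $\partial_s^m(\alpha\gamma f)$ and hence $\partial_s^m\alpha$, yet Theorem~\ref{deriv ests} only bounds derivatives of $\beta$ and $\gamma$. However, the remedy you propose (a separate Bernstein--Bando--Shi estimate for $\alpha$) is actually superfluous in your own argument: once $f,g$ are controlled in $C^{m+2}$ via the spatial ODEs, the identity $\alpha=f_sg_s/(fg)$ delivers $\partial_s^m\alpha$ bounds for free, and it also neatly explains how the paper's integral formula could be made to work without a new Bernstein lemma. Your closing remarks about preservation of the core regularity conditions and the time-dependent Dirichlet data are a bit terse but are also treated cursorily in the paper; the substance is sound.
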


\begin{proof}
As for Ricci flow, we prove the contrapositive of this statement. Suppose that
the solution exists on a maximal time interval $[0,T)$, and that there exists
$K>0$ such that $\sup_{0\leq t<T}|\alpha(s,t)|$, $\sup_{0\leq t<T}%
|\beta(s,t)|$, and $\sup_{0\leq t<T}|\gamma(s,t)|\leq K$. The key idea is to
show that $G(s,T)$ is a smooth limit metric on the solid torus of the form
given in (\ref{G}). We can define $f(T)$ and $g(T)$ to be
\begin{align*}
f(s,T)  &  =f(s,\tau)+\int_{\tau}^{T}\alpha\gamma f(s,t)\,\mathrm{dt}\,\\
g(s,T)  &  =g(s,\tau)+\int_{\tau}^{T}\alpha\beta g(s,t)\,\mathrm{dt}\,,
\end{align*}
where $\tau\in\lbrack0,T)$ is arbitrary. Using this formulation to compute
derivatives of $f(T)$ and $g(T)$, one can use Theorem~\ref{deriv ests} to
bound the curvature quantities, thereby showing that both $f(T)$ and $g(T)$
are smooth. It remains to show that the metric $G(T)=f(T)^{2}\,d\mu
^{2}+g(T)^{2}\,d\lambda^{2}+ds^{2}$ extends to a smooth metric on the solid
torus; namely, that $g(T)$ extends to an even function such that $g(0,T)>0$
and that $f(T)$ extends to an odd function with $f_{s}(0,T)=1$. These facts
can be seen from the integral formulation above, if one recalls that the
curvatures extend to even functions. Then $G(T)$ is a smooth metric on the
solid torus, so Theorem~\ref{ste} implies that a solution exists on
$[T,T+\epsilon)$ for some $\epsilon>0$. This contradicts $T$ being maximal.
\end{proof}

\begin{corollary}
[Theorem~\ref{long-long-time}]Let $G_{0}$ be as in (\ref{G}). Then the
solution $G(t)$ to cross curvature flow with $G(0)=G_{0}$ exists for all time.
\end{corollary}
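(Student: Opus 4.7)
The plan is to deduce long-time existence immediately from the preceding results by ruling out finite-time blowup of the principal sectional curvatures. Let $(u,v,w)$ be the solution on a maximal time interval $[0,T)$ produced by Theorem~\ref{ste} and the standard extension procedure. I would argue by contradiction: assume $T<\infty$. The essential input is Theorem~\ref{curvature estimates}, which asserts (for as long as the flow exists) that $\alpha,\beta,\gamma \le K_0$, where $K_0 := \sup_{M\times\{0\}}\{\alpha,\beta,\gamma\}$ depends only on the initial data. In particular, on $[0,T)$ one has
\[
\sup_{s\in M^3}|\alpha(s,t)|, \quad \sup_{s\in M^3}|\beta(s,t)|, \quad \sup_{s\in M^3}|\gamma(s,t)| \;\le\; K_0,
\]
so none of the three quantities in the continuation criterion of Theorem~\ref{lte} tends to infinity as $t \nearrow T$.

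Next I would invoke Theorem~\ref{lte} in its contrapositive form: since all principal sectional curvatures remain uniformly bounded on $[0,T)$, the derivative estimates of Bernstein--Bando--Shi type (Theorem~\ref{deriv ests}) yield uniform bounds on $\partial_s^m\beta$ and $\partial_s^m\gamma$ on each compact subinterval $[\delta,T-\delta]$, and hence the integral representations
\[
f(s,T) = f(s,\tau) + \int_\tau^T \alpha\gamma\, f(s,t)\,dt, \qquad g(s,T) = g(s,\tau) + \int_\tau^T \alpha\beta\, g(s,t)\,dt
\]
(for any $\tau\in[0,T)$) produce a smooth limit metric $G(T)$ of the form \eqref{G}, with the parity and boundary conditions at the core preserved because the curvatures extend to even functions and the integrals respect these symmetries. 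This is precisely the content of Theorem~\ref{lte}.

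With $G(T)$ identified as a smooth rotationally symmetric metric of the prescribed form on the solid torus, I would apply Theorem~\ref{ste} with $G(T)$ as new initial datum: there exists $\varepsilon>0$ and a solution to \textsc{xcf} on $[T,T+\varepsilon)$, which concatenates with the original solution to yield a solution on $[0,T+\varepsilon)$. This contradicts the maximality of $T$, so $T=\infty$ and the solution exists for all time.

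Concerning difficulty, essentially none of the hard analysis happens in this corollary itself; all of the work has been absorbed into the preservation of the upper curvature bound (Theorem~\ref{curvature estimates}(2)) and the derivative-free continuation criterion (Theorem~\ref{lte}). The only subtlety worth flagging is verifying that the solution at time $T$, reconstructed via the integral formulas, genuinely satisfies the smoothness conditions at the core ($f$ odd with $f_s(0,T)=2\pi$, $g$ even with $g(0,T)>0$) so that Theorem~\ref{ste} can be re-applied; but this follows from the same regularity considerations already used in the proof of Theorem~\ref{lte}, together with the lower bound $\alpha \ge L_0/(4K_0 L_0 t + 1)$ from Theorem~\ref{curvature estimates}(5), which keeps the parabolic system nondegenerate up to and including time $T$.
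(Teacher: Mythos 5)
Your proof is correct and follows essentially the same route as the paper: invoke the uniform curvature bounds from Theorem~\ref{curvature estimates} and apply the continuation criterion of Theorem~\ref{lte} to conclude $T=\infty$. The paper cites Parts~(2) and~(3) of Theorem~\ref{curvature estimates} (using the exponential lower bound to keep $\alpha$ bounded away from zero on finite intervals, so that the system stays parabolic), whereas you cite Part~(5); either supplies the needed positive lower bound, and your flagging of the degeneracy issue matches the reasoning implicit in the paper's appeal to Part~(3).
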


\begin{proof}
By Parts~(2) and (3) of Theorem~\ref{curvature estimates}, we obtain uniform
bounds for $\alpha,\beta,\gamma$. Thus Theorem~\ref{lte} implies $T=\infty$,
to wit, that the flow exists for all time.
\end{proof}

\subsection{Analysis of core conditions}

Recall that one of our requirements on the metric $G$ was that $f=0$ on the
core circle. Then the form in which the evolution equations (\ref{xcf uvw})
are written involves the quantity $u_{s}=f_{s}/f$, which blows up as
$s\searrow0$. Because of this, we treat the core as a special case for the
curvature evolution equations. We do this in a standard way using
l'H\^{o}pital's rule, beginning with the following lemma.

\begin{lemma}
\label{alphabetau_s}On the solid torus, one has%
\[
\lim_{s\rightarrow0}\,u_{s}^{2}(\alpha-\beta)=\lim_{s\rightarrow0}\ \frac
{1}{3}(\beta\gamma-\beta^{2}-\beta_{ss}).
\]

\end{lemma}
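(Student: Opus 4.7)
The plan is to reduce both sides to explicit formulas in the Taylor coefficients of $f$ and $g$ at the core, using that $f$ extends to a smooth odd function with $f_s(0)=2\pi$ and $g$ extends to a smooth even function with $g(0)>0$.

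First I would note that $\alpha$, $\beta$, $\gamma$ are sectional curvatures of a rotationally symmetric smooth metric, so each extends to a smooth even function of $s$ across the core; the first condition in (\ref{core}) forces $\alpha(0)=\beta(0)$, so $\alpha-\beta$ is smooth, even, and vanishes at $s=0$, giving
$$(\alpha-\beta)(s)=\tfrac12\bigl(\alpha_{ss}(0)-\beta_{ss}(0)\bigr)s^2+O(s^4).$$
Since $f(0)=0$ and $f_s(0)=2\pi$, l'H\^opital yields $\lim_{s\to 0}s\,u_s=\lim_{s\to 0}sf_s/f=1$, so
$$\lim_{s\to 0}u_s^2(\alpha-\beta)=\lim_{s\to 0}(s\,u_s)^2\cdot\frac{\alpha-\beta}{s^2}=\tfrac12\bigl(\alpha_{ss}(0)-\beta_{ss}(0)\bigr).$$
The lemma thus reduces to the purely algebraic identity
$$\tfrac12\bigl(\alpha_{ss}(0)-\beta_{ss}(0)\bigr)=\tfrac13\bigl(\beta\gamma-\beta^2-\beta_{ss}\bigr)(0).$$

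Next I would evaluate each side at $s=0$ by clearing denominators and differentiating. Writing $a=f_s(0)$, $b=f_{sss}(0)$, $p=g(0)$, $q=g_{ss}(0)$, $r=g_{ssss}(0)$ (all other low-order derivatives vanishing by parity), the identity $\beta g=g_{ss}$ gives $\beta(0)=q/p$, and differentiating it twice yields $\beta_{ss}(0)=r/p-q^2/p^2$; similarly, $\gamma f=f_{ss}$ gives $\gamma(0)=b/a$ via one application of l'H\^opital. The delicate computation is $\alpha_{ss}(0)$: I would start from $fg\,\alpha=f_sg_s$, apply the Leibniz rule
$$\bigl(fg\,\alpha\bigr)'''=(fg)'''\,\alpha+3(fg)''\,\alpha'+3(fg)'\,\alpha''+fg\,\alpha''',$$
and evaluate at $s=0$, using the parity vanishing together with $\alpha(0)=q/p$ and $\alpha_s(0)=0$ (the latter being forced by parity and confirmed after the second differentiation). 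This yields
$$\alpha_{ss}(0)=\frac{2bq}{3ap}+\frac{r}{3p}-\frac{q^2}{p^2},$$
and substituting the five quantities reduces both sides of the target identity to the common value $\tfrac{1}{3p}(bq/a-r)$.

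The main obstacle is the computation of $\alpha_{ss}(0)$: because $\alpha=f_sg_s/(fg)$ has indeterminate form $0/0$ at the core with $\alpha_s(0)=0$, a single application of l'H\^opital does not resolve it, and one must differentiate the cleared identity $fg\,\alpha=f_sg_s$ three times. Once the Leibniz expansion is in hand and the parity vanishings are applied, the remaining step is straightforward bookkeeping.
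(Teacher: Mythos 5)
Your proof is correct but decomposes the computation differently from the paper. The paper rewrites $u_s^2(\alpha-\beta)$ as $\frac{f_s^2}{g}\cdot\frac{f_sg_s-fg_{ss}}{f^3}$ and applies l'H\^opital's rule three times to the inner fraction, then matches the result with the right-hand side using $\beta_{ss}(0)=g_{ssss}/g-g_{ss}^2/g^2$ and $\gamma(0)=f_{sss}(0)$. You instead exploit the parity structure up front: $\alpha-\beta$ is a smooth even function vanishing at the core, while $s\,u_s\to1$, so the limit reduces cleanly to the Taylor-coefficient identity $\tfrac12\bigl(\alpha_{ss}(0)-\beta_{ss}(0)\bigr)=\tfrac13\bigl(\beta\gamma-\beta^2-\beta_{ss}\bigr)(0)$; you then verify this by differentiating the cleared relation $fg\,\alpha=f_sg_s$ three times via Leibniz rather than by l'H\^opital on a ratio. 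Your reduction step is essentially the content of the paper's remark immediately following the lemma, which records $\lim\alpha_{ss}=\tfrac13(\beta_{ss}+2\beta\gamma-2\beta^2)$ --- so your argument can be read as proving the remark's identity first and deriving the lemma from it, rather than the reverse. The payoff is a somewhat more modular argument that avoids the implicit normalization $f_s(0)=1$ the paper uses when simplifying its triple l'H\^opital (you carry $a=f_s(0)$ throughout), and the Leibniz computation of $\alpha_{ss}(0)=\frac{2bq}{3ap}+\frac{r}{3p}-\frac{q^2}{p^2}$ is arguably more transparent; I checked that both sides of your reduced identity equal $\tfrac{1}{3p}\bigl(bq/a-r\bigr)$.
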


\begin{proof}
Writing the expression $u_{s}(\alpha-\beta)$ in terms of $f$ and $g$ and their
derivatives, we obtain
\[
u_{s}^{2}(\alpha-\beta)=\frac{f_{s}^{2}}{f^{2}}\left(  \frac{f_{s}g_{s}}%
{fg}-\frac{g_{ss}}{g}\right)  =\frac{f_{s}^{2}}{g}\left(  \frac{f_{s}%
g_{s}-fg_{s}s}{f^{3}}\right)  .
\]
The quantity outside the parentheses above has a well-defined limit as
$s\rightarrow0$, and both the numerator and denominator of the fraction inside
the parentheses approach $0$ as $s\rightarrow0$. We apply l'H\^{o}pital's rule
three times to obtain the following:
\begin{align*}
\lim_{s\rightarrow0}\,u_{s}^{2}(\alpha-\beta)  &  =\lim_{s\rightarrow0}%
\ \frac{f_{s}}{g}\left(  \frac{f_{ss}g_{s}-fg_{sss}}{3f^{2}}\right) \\
&  =\lim_{s\rightarrow0}\ \frac{1}{g}\left(  \frac{f_{sss}g_{s}+f_{ss}%
g_{ss}-f_{s}g_{sss}-fg_{ssss}}{6f}\right) \\
&  =\lim_{s\rightarrow0}\ \frac{1}{3}\left(  f_{sss}\frac{g_{ss}}{g}%
-\frac{g_{ssss}}{g}\right)  .
\end{align*}
A quick computation of the limit of $\beta_{ss}$ as $s\rightarrow0$ reveals
that
\[
\lim_{s\rightarrow0}\,\beta_{ss}=\lim_{s\rightarrow0}\ \frac{g_{ssss}}%
{g}-\frac{g_{ss}^{2}}{g^{2}},
\]
and this together with the fact that $f_{sss}$ and $\gamma$ have the same
limit as $s\rightarrow0$ yields the conclusion.
\end{proof}

From the result above, the following description of the curvature evolution at
the core follows readily.

\begin{lemma}
\label{curvevcore}At the core, the evolution equations for the curvatures are
as follows:
\begin{align}
&  \alpha_{t}=4\alpha\alpha_{ss}+2\alpha^{3}-4\alpha^{2}\gamma\\
&  \beta_{t}=\frac{4}{3}\alpha\beta_{ss}-\frac{2}{3}\beta^{3}-\frac{4}{3}%
\beta^{2}\gamma\\
&  \gamma_{t}=2\alpha\gamma_{ss}-2\alpha^{2}\gamma.
\end{align}

\end{lemma}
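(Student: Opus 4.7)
The plan is to pass to the limit $s \to 0$ in the interior evolution equations (\ref{curvev}), using l'H\^{o}pital arguments to handle the apparently singular quantities involving $u_s = f_s/f$, and then to invoke the core identity $\alpha = \beta = \gamma$ to collapse the surviving coefficients. Since $f$ extends to a smooth odd function of $s$ with $f_s(0) = 2\pi$ and $g$ extends to a smooth even function with $g(0) > 0$, Taylor expansion at the core gives $u_s = 1/s + O(s)$ and $v_s = O(s)$. Each sectional curvature extends to a smooth even function of $s$, so $\alpha_s, \beta_s, \gamma_s = O(s)$ as well. Consequently every product of $v_s$ with a first derivative of a curvature vanishes at the core, while
\[
\lim_{s \to 0} u_s \alpha_s = \alpha_{ss}(0), \qquad \lim_{s \to 0} u_s \beta_s = \beta_{ss}(0), \qquad \lim_{s \to 0} u_s \gamma_s = \gamma_{ss}(0).
\]
The only remaining indeterminate form is $u_s^2(\alpha - \beta)$ in the $\beta_t$ equation, and for this Lemma \ref{alphabetau_s} supplies
\[
\lim_{s \to 0} u_s^2 (\alpha - \beta) = \tfrac{1}{3}\bigl(\beta\gamma - \beta^2 - \beta_{ss}\bigr).
\]

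With these limits in hand, each line of (\ref{curvev}) can be read off term by term. For $\alpha_t$, the diffusive contributions $\alpha\alpha_{ss} + 2\alpha u_s\alpha_s + \beta u_s \alpha_s$ pass to $(3\alpha + \beta)\alpha_{ss}$ while the $v_s$-weighted derivative terms drop out, leaving $2\alpha(\alpha^2 - 2\beta\gamma)$ from the algebraic part; substituting $\beta = \alpha$ at the core yields $4\alpha\alpha_{ss} + 2\alpha^3 - 4\alpha^2\gamma$. For $\gamma_t$, the surviving derivative pieces are $\alpha\gamma_{ss} + \beta u_s \gamma_s \to (\alpha + \beta)\gamma_{ss}$, and together with the algebraic term $-2\alpha\beta\gamma$ the core identity collapses everything to $2\alpha\gamma_{ss} - 2\alpha^2\gamma$. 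The $\beta_t$ equation is the longest: the three gradient-type terms give $(\alpha + 3\beta - 2\alpha)\beta_{ss}$, the nonlinear piece $2\beta u_s^2(\alpha - \beta)$ contributes the additional $-\tfrac{2}{3}\beta\beta_{ss} + \tfrac{2}{3}\beta^2\gamma - \tfrac{2}{3}\beta^3$ via Lemma \ref{alphabetau_s}, and combining with $-2\alpha\beta\gamma$ and using $\alpha = \beta$ at the core produces the claimed $\tfrac{4}{3}\alpha\beta_{ss} - \tfrac{2}{3}\beta^3 - \tfrac{4}{3}\beta^2\gamma$.

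The main obstacle is purely bookkeeping discipline: the identity $\alpha = \beta = \gamma$ at the core may only be invoked \emph{after} the $s \to 0$ limits (including the ones already absorbed into Lemma \ref{alphabetau_s}) have been taken, and the substitution $\beta = \alpha$ must be applied selectively so that the stated form of each equation---which deliberately retains $\alpha$, $\beta$, and $\gamma$ as distinct symbols---is preserved. Once these conventions are fixed, the conclusion follows by direct substitution into the limit expressions above.
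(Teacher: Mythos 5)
Your proof is correct and follows essentially the same route as the paper: both rest on (i) the curvatures extending to smooth even functions of $s$, so that l'H\^opital (equivalently your Taylor expansion) gives $\lim_{s\to 0} u_s\alpha_s = \alpha_{ss}(0)$ and likewise for $\beta,\gamma$, while all $v_s$-weighted terms vanish; (ii) Lemma~\ref{alphabetau_s} to resolve the $u_s^2(\alpha-\beta)$ term in the $\beta$-equation; and (iii) the core identity $\alpha=\beta$ applied only after the limits are taken. Your term-by-term bookkeeping matches the paper's asserted equations, and your explicit Taylor expansions merely unpack the one-line l'H\^opital appeal the paper makes.
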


\begin{proof}
We recall that the curvatures all extend past the origin to smooth even
functions of $s$. Using this fact and l'H\^{o}pital's rule, we have
\[
\lim_{s\rightarrow0}u_{s}\alpha_{s}=\lim_{s\rightarrow0}\alpha_{ss},
\]
with similar identities holding for $\beta$ and $\gamma$. The lemma follows
readily from this fact and Lemma~\ref{alphabetau_s}, using the fact that
$\alpha=\beta$ at the core.
\end{proof}

\begin{remark}
It may be noted that since $\alpha=\beta$ at the core, their evolution
equations should be identical. In fact this is true; the missing ingredient is
the fact that
\[
\lim_{s\rightarrow0}\,\alpha_{ss}=\lim_{s\rightarrow0}\ \frac{1}{3}(\beta
_{ss}+2\beta\gamma-2\beta^{2}).
\]
This may be proved by explicitly writing the formula for $\alpha_{ss}$ in
terms of $f$ and $g$ and their derivatives, and using l'H\^{o}pital's rule to
simplify the limit as $s\rightarrow0$, as above.
\end{remark}

\subsection{Proofs of curvature estimates}

Here we provide the proofs of the curvature estimates, using the maximum principle.

\begin{lemma}
As long as a smooth \textsc{xcf} solution exists with $\alpha>0$, one has
$\beta\geq0$ and $\gamma\geq0$.
\end{lemma}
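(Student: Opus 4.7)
The plan is to apply a parabolic maximum principle to $\beta$ and $\gamma$ using the evolution equations in~(\ref{curvev}). Both have the shape
\[
\phi_t = \alpha\phi_{ss} + B\phi_s + C\phi,
\]
with $\alpha > 0$ by hypothesis and with the zeroth-order coefficient multiplying the curvature $\phi = \beta$ or $\phi = \gamma$ itself. Lemma~\ref{alphabetau_s} shows that the potentially dangerous factor $u_s^2(\alpha-\beta)$ appearing in $C$ for the $\beta$-equation extends to a bounded function across the core, and the symmetric argument bounds $v_s^2(\alpha-\gamma)$ in the $C$ for the $\gamma$-equation; so $C$ is bounded uniformly on the parabolic cylinder.

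To feed the maximum principle I would next handle the lateral boundary. At $r = 1$ the Dirichlet conditions~(\ref{boundary conditions}) give $u_t = v_t = 1/(1+4t)$, while the XCF system~(\ref{xcf uvw}) reads $u_t = \alpha\gamma$ and $v_t = \alpha\beta$. Combining,
\[
\alpha\beta = \alpha\gamma = \frac{1}{1+4t}
\]
on the outer boundary, so the hypothesis $\alpha > 0$ forces $\beta, \gamma > 0$ there for all $t \geq 0$. Together with the standing assumption that the initial metric has negative sectional curvatures (so $\beta, \gamma > 0$ at $t = 0$), this gives strict positivity of both quantities on the entire parabolic boundary.

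Now suppose for contradiction that $t_0 > 0$ is the first time at which $\min(\beta,\gamma)$ reaches $0$, attained at some interior point $s_\ast$; without loss of generality say $\beta(s_\ast, t_0) = 0$. Then $\beta_s(s_\ast, t_0) = 0$, $\beta_{ss}(s_\ast, t_0) \geq 0$, and $\beta_t(s_\ast, t_0) \leq 0$. Plugging into the $\beta$-equation in~(\ref{curvev}), both the first-order term $B\beta_s$ and the zeroth-order term $C\beta$ vanish, leaving $\beta_t = \alpha\beta_{ss} \geq 0$, so $\beta_t = 0$. Rescaling by $\tilde\beta := e^{-\lambda t}\beta$ for $\lambda$ larger than a uniform upper bound for $|C|$ promotes this to the strict inequality $\tilde\beta_t > 0$ at the alleged minimum, contradicting $\tilde\beta_t \leq 0$. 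The same argument applied to $\gamma$ completes the proof.

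The main technical obstacle is the coordinate singularity at the core $s = 0$, where the coefficient $B = 3\beta u_s + \gamma v_s - 2\alpha u_s$ blows up because $u_s = f_s/f \sim 1/s$. This is resolved by either working in the smooth radial coordinate $r$ (in which the curvatures and the PDEs extend smoothly across the core), or by treating the core as a separate case using the regularized equations of Lemma~\ref{curvevcore}, whose zeroth-order terms are still proportional to $\beta$ and $\gamma$ so that the same interior-minimum calculation applies verbatim.
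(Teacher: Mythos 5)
Your setup matches the paper's: you isolate the zeroth-order coefficient in the $\beta$-equation, observe (via Lemma~\ref{alphabetau_s}) that $u_{s}^{2}(\alpha-\beta)$ extends boundedly across the core, handle the core separately using Lemma~\ref{curvevcore}, and correctly derive strict positivity of $\beta,\gamma$ on the outer boundary from the Dirichlet data together with the flow equations $u_{t}=\alpha\gamma$, $v_{t}=\alpha\beta$. All of this is sound and is in fact somewhat more explicit about the boundary than the paper.

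However, the final step has a genuine gap. You argue that at a hypothetical first zero $(s_{*},t_{0})$ of $\beta$ one gets $\beta_{t}=\alpha\beta_{ss}\geq 0$, hence $\beta_{t}=0$ --- so far correct, but this is not yet a contradiction. You then claim the exponential rescaling $\tilde\beta=e^{-\lambda t}\beta$ ``promotes this to the strict inequality $\tilde\beta_{t}>0$.'' It does not. Writing the rescaled equation
\[
\tilde\beta_{t}=\alpha\tilde\beta_{ss}+B\,\tilde\beta_{s}+(C-\lambda)\,\tilde\beta,
\]
at the touch point one has $\tilde\beta=0$, $\tilde\beta_{s}=0$, $\tilde\beta_{ss}\geq0$, so $\tilde\beta_{t}=\alpha\tilde\beta_{ss}\geq0$ --- still only a weak inequality, because the modified zeroth-order term $(C-\lambda)\tilde\beta$ vanishes precisely where $\tilde\beta=0$. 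Exponential rescaling helps only where the quantity is strictly negative; it cannot manufacture strictness at a zero. This is why the paper, after conjugating by $e^{-Ct}$, introduces the \emph{additive} barrier $\phi+\delta(t+1)$: at the first time $\phi+\delta(t+1)=0$ one has $\phi=-\delta(t+1)<0$, so the (now nonpositive) zeroth-order coefficient makes $\phi_{t}\geq0$, whence $(\phi+\delta(t+1))_{t}\geq\delta>0$, a genuine contradiction with the minimality in $t$. You need this additive $\delta$-shift (or some equivalent strictly positive perturbation); the multiplicative rescaling alone does not close the argument.
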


\begin{proof}
This is a standard maximum principle argument, slightly tweaked to accommodate
the core and boundary conditions. We give the proof for $\beta$; the proof for
$\gamma$ is similar. Suppose a smooth solution with $\alpha>0$ exists on the
time interval $[0,\tau]$. Let $C_{0}$ be the maximum attained by the function
$u_{s}^{2}(\alpha-\beta)-\alpha\gamma$ on this time interval. (Recall that
although $u_{s}$ blows up at the origin, $u_{s}^{2}(\alpha-\beta)$ is at least
continuous there by Lemma~\ref{alphabetau_s}.) Define $C_{1}$ to be the
maximum achieved by $-2/3(\beta^{2}+2\beta\gamma)$ at the core on the time
interval $[0,\tau]$, and let $C$ be the maximum of $C_{0}$ and $C_{1}$. Define
$\phi(s,t)=e^{-Ct}\beta(s,t)$. The evolution equation for $\phi$ is
\[
\phi_{t}=\alpha\phi_{ss}+(3\beta u_{s}+\gamma v_{s}-2\alpha u_{s})\phi
_{s}+\phi(u_{s}^{2}(\alpha-\beta)-\alpha\gamma-C)
\]
on the interior, and at the core it is
\[
\phi_{t}=\frac{4}{3}\alpha\phi_{ss}+\phi(-\frac{2}{3}(\beta^{2}+2\beta
\gamma)-C).
\]
At a local minimum for $\phi$ in the interior or at the core, one has
$\phi_{ss}\geq0$ and $\phi_{s}=0$; and from our definition of $C$, it follows
that if $\phi<0$ at such a local minimum, then $\phi_{t}\geq0$ there.

For $\delta>0$, consider the function $\phi+\delta(t+1)$. Initially this
function has all values larger than $0$, since $\beta$ has all values larger
than $0$ initially. If there is a first time in $(0,\tau]$ that $\phi
(s,t)+\delta(t+1)=0$ for some $s$, then the observations above imply that
$(\phi+\delta(t+1))_{t}=\phi_{t}+\delta\geq\delta$ there. (Such a minimum
cannot occur on the boundary, since values of $\beta$ are always positive
there.) But since $t$ is the first time that such a minimum occurs, computing
the time derivative $(\phi+\delta(t+1))_{t}$ from below shows that this
quantity must be less than or equal to $0$, a contradiction. Thus $\phi
+\delta(t+1)$ is positive on $[0,\tau]$, and since $\delta>0$ is arbitrary, so
is $\phi$. But then so is $\beta$, since $\phi$ is a positive multiple of
$\beta$.
\end{proof}

\begin{lemma}
\label{alphalb} Suppose a solution exists for $0\leq t\leq T$. Define
$K=\sup_{M^{3}\times\lbrack0,T]}\{\beta,\gamma\}$. Then for all $t\in
\lbrack0,T]$, one has
\[
\alpha_{\min}(t)\geq\alpha_{\min}(0)e^{-4K^{2}t}.
\]

\end{lemma}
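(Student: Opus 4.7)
The plan is to rescale $\alpha$ to absorb the potentially negative reaction term in its evolution, turning the inequality into a parabolic supersolution inequality, and then apply the same $\delta$-perturbation minimum principle used in the proof that $\beta,\gamma\geq 0$.

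First I would work with the interior evolution equation~(5.1a):
\[
\alpha_t=\alpha\,\alpha_{ss}+A\,\alpha_s+2\alpha(\alpha^{2}-2\beta\gamma),\qquad A:=2\alpha(u_s+v_s)+\beta u_s+\gamma v_s.
\]
Using $\alpha>0$ (part (1) of Theorem~\ref{curvature estimates}) together with the hypothesis $\beta,\gamma\leq K$, one gets $2\alpha(\alpha^{2}-2\beta\gamma)\geq -4K^{2}\alpha$. Setting $\phi(s,t):=e^{4K^{2}t}\alpha(s,t)$, a short computation then yields at interior points the clean supersolution inequality
\[
\phi_t\geq\alpha\,\phi_{ss}+A\,\phi_s+\phi\bigl(4K^{2}+2\alpha^{2}-4\beta\gamma\bigr)\geq \alpha\,\phi_{ss}+A\,\phi_s.
\]

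Next I would deal with the core, where $u_s$ blows up so the formula for $A$ is singular. Here the substitute is Lemma~\ref{curvevcore}, which gives $\alpha_t=4\alpha\,\alpha_{ss}+2\alpha^{3}-4\alpha^{2}\gamma$ at $r=0$. Since $\alpha=\beta$ at the core by rotational symmetry, one has $-4\alpha^{2}\gamma\geq -4K^{2}\alpha$, yielding the analogous inequality $\phi_t\geq 4\alpha\,\phi_{ss}+2\alpha^{2}\phi\geq 4\alpha\,\phi_{ss}$.

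With these two inequalities in hand I would then run a parabolic minimum principle on $\phi$, mimicking the previous lemma. For $\delta>0$, consider $\psi(s,t):=\phi(s,t)-\alpha_{\min}(0)+\delta(t+1)$. Initially $\psi(s,0)\geq \delta>0$. Suppose there were a first time $t^{*}>0$ at which $\psi(s^{*},t^{*})=0$. At an interior or core minimum one has $\psi_s=0$ and $\psi_{ss}\geq 0$, so by the supersolution inequality $\phi_t\geq 0$ there, giving $\psi_t=\phi_t+\delta\geq \delta>0$, which contradicts the first-time condition $\psi_t\leq 0$. The minimum cannot occur on the boundary because, exactly as in the $\beta,\gamma\geq 0$ lemma, the Dirichlet conditions (corresponding to the homothetically evolving hyperbolic model of Example~\ref{GeodesicTube}) keep the boundary values of $\phi$ bounded below uniformly in time. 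Letting $\delta\searrow 0$ gives $\phi\geq \alpha_{\min}(0)$, i.e.\ $\alpha(s,t)\geq \alpha_{\min}(0)e^{-4K^{2}t}$.

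The main obstacle is the core singularity: the first-order coefficient $A$ is singular as $s\searrow 0$, so the maximum principle cannot be applied uniformly, and one must treat the core as a separate case using Lemma~\ref{curvevcore} and the identity $\alpha=\beta$ there. A secondary subtlety is ensuring the boundary cannot host the minimum; this relies on the particular form of the Dirichlet data and follows the same pattern as in the earlier argument for $\beta,\gamma\geq 0$.
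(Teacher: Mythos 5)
Your proof is correct and takes essentially the same approach as the paper: both are first-touching-time maximum principle arguments on $\alpha$, treating the interior and core separately and dismissing the boundary via the prescribed Dirichlet data. The only cosmetic difference is that you absorb the exponential factor into a rescaled unknown $\phi=e^{4K^2t}\alpha$ with an additive $\delta(t+1)$ perturbation, whereas the paper carries a barrier function $A(t)=\alpha_{\min}(0)e^{-4K^2t}-\delta$ directly; these are formally equivalent. Your core case is in fact a touch cleaner: you use $\alpha=\beta\leq K$ at the core to get the uniform bound $\alpha_t\geq -4K^2\alpha$ there, matching the interior estimate, whereas the paper's core argument instead invokes the auxiliary inequality $K>A(t)$, which relies on a curvature normalization at $t=0$ that your version does not need.
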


\begin{proof}
Note that $\alpha_{\min}(0)\leq K$, since the curvatures are all equal to $-1$
at the origin at time $t=0$. For $\delta>0$, define the barrier function
$A(t)=\alpha_{\min}(0)e^{-4K^{2}t}-\delta$. Then $\alpha\geq A+\delta$ at
$t=0$. If there is a first time $t\in(0,T]$ such that $\alpha_{\min}(t)=A(t)$,
then $\alpha_{\min}(t)$ is attained either at an interior point or the core.

In the former case, one has
\[
\left\{
\begin{array}
[c]{l}%
\alpha_{t}\leq A^{\prime},\\
\alpha\alpha_{ss}=A\alpha_{ss}\geq0,\\
\alpha_{s}=0.
\end{array}
\right.
\]
By hypothesis $\beta\leq K$ and $\gamma\leq K$. Hence at $(s,t)$, one has
\[
A^{\prime2}(A+\delta)\geq\alpha_{t}>-4\alpha\beta\gamma\geq-4K^{2}A.
\]
This is evidently impossible.

If the minimum occurs at the core, then appealing to the evolution equation
there shows that $\alpha_{t}\geq-4\alpha^{2}\gamma\geq-2A^{2}K$. This yields
\[
A^{\prime2}(A+\delta)\geq\alpha_{t}\geq-2A^{2}K.
\]
Since $K\geq\alpha_{\min}(0)$ and $K\geq1$ we have $K>A$ at $t$. Plugging into
the above inequality yields
\[
A^{\prime2}(A+\delta)\geq-4A^{2}K>-4K^{2}A.
\]
This is plainly a contradiction. Since $\delta$ was arbitrary and $\alpha$ is
controlled on the boundary, the lemma follows.
\end{proof}

\begin{corollary}
[Part~(1) of Theorem~\ref{curvature estimates}]For as long as a solution
exists, $\alpha>0$.
\end{corollary}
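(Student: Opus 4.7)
The plan is to deduce the corollary directly from Lemma \ref{alphalb}, which supplies the quantitative lower bound $\alpha_{\min}(t) \geq \alpha_{\min}(0)\,e^{-4K^{2}t}$ on any interval $[0,T]$ where a smooth solution persists. All that remains is to verify that the two inputs of that bound are well-posed, namely that $\alpha_{\min}(0)>0$ and that the constant $K := \sup_{M^{3}\times [0,T]}\{\beta,\gamma\}$ is finite.

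For the first, I would invoke the standing hypothesis from Section \ref{Setup}: the initial datum $G_{0}$ is chosen with strictly negative sectional curvatures throughout the solid torus, so $\alpha(s,0)>0$ for every $s\in [0,s_{0}]$, with $\alpha(s,0)$ extending continuously to the boundary (where the Dirichlet data \eqref{boundary conditions} are those of a hyperbolic metric and hence give positive principal curvatures). Compactness of $\overline{M^{3}}$ then promotes this pointwise positivity to $\alpha_{\min}(0)>0$. For the second, note that any smooth solution on a closed interval $[0,T]$ has $\beta,\gamma$ continuous on the compact set $\overline{M^{3}}\times [0,T]$ (again using the prescribed boundary data), so $K<\infty$ automatically.

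With these two facts in hand, Lemma \ref{alphalb} immediately yields
\[
\alpha(s,t) \;\geq\; \alpha_{\min}(0)\,e^{-4K^{2}t} \;>\; 0
\]
for every $(s,t)\in M^{3}\times [0,T]$. Since $T$ is arbitrary below the maximal existence time, this gives $\alpha>0$ for as long as the solution exists, which is the claim. Accordingly, the \textsc{xcf} equations \eqref{xcf uvw} remain strictly parabolic in $\alpha\partial_{s}^{2}$ throughout the evolution.

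There is essentially no obstacle here beyond record-keeping: the analytic content has already been absorbed by Lemma \ref{alphalb}, and the corollary is a direct packaging of its conclusion once the positivity of $\alpha_{\min}(0)$ and the finiteness of $K$ are noted.
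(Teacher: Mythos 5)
Your argument is the same as the paper's: the corollary is read off directly from Lemma~\ref{alphalb}, with the added (and correct) bookkeeping that $\alpha_{\min}(0)>0$ by the negative-curvature hypothesis on $G_0$ and that $K<\infty$ by compactness of $\overline{M^3}\times[0,T]$. The paper states this in one line as ``follows immediately from the lemma,'' so your proposal matches it.
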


This follows immediately from the lemma, and shows that the \textsc{xcf}
operator remains elliptic for as long as the flow exists.

\smallskip

Now let $K_{0}=\sup_{M\times\{0\}}\{\alpha,\beta,\gamma\}$.

\begin{lemma}
\label{betagammaub} Suppose a solution exists for $0\leq t\leq T$. Define
\[
K^{\prime}=\max\ \{K_{0},\sup_{M\times\lbrack0,T]}\alpha\}.
\]
Then with $K$ as in Lemma~\ref{alphalb}, we have $K\leq K^{\prime}$.
\end{lemma}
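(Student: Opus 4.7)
The plan is to apply the parabolic maximum principle separately to $\beta$ and $\gamma$, exploiting the sign of the reaction terms in~(\ref{curvev}). The key observation is that when $\beta \geq \alpha$, the reaction $2\beta[u_s^2(\alpha - \beta) - \alpha\gamma]$ in the $\beta$-equation is non-positive, using non-negativity of $\alpha, \beta, \gamma$ from the preceding lemma; an analogous statement holds for $\gamma$ when $\gamma \geq \alpha$. So once $\beta$ or $\gamma$ attempts to exceed $\sup \alpha$, the reaction term drives it back.

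For $\beta$, I would fix $\delta > 0$ and consider the barrier $\phi(s,t) = \beta(s,t) - K' - \delta(1+t)$. Since $K' \geq K_0$, $\phi(\cdot, 0) < 0$, and the Dirichlet boundary conditions keep $\phi$ controlled on $\partial M$ (as in the analogous treatment of $\alpha$ in Lemma~\ref{alphalb}). Suppose $t_0 \in (0,T]$ were the first time $\phi$ reaches $0$ at some interior or core point $s_0$; then $\phi_t(s_0, t_0) \geq 0$. If $s_0$ is interior, $\phi_s = 0$ and $\phi_{ss} \leq 0$, and since $\beta(s_0, t_0) > K' \geq \sup \alpha$, every term on the right of the $\beta$-equation in~(\ref{curvev}) is non-positive; this forces $\phi_t \leq -\delta$, a contradiction. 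If $s_0$ lies at the core, the regularity conditions~(\ref{core}) imply $\alpha = \beta$ there (by l'H\^{o}pital applied to $f_s g_s/(fg)$ and $g_{ss}/g$), so $\beta(s_0, t_0) = \alpha(s_0, t_0) \leq K'$, contradicting $\phi(s_0, t_0) = 0$. Letting $\delta \searrow 0$ yields $\beta \leq K'$ on $M \times [0,T]$.

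For $\gamma$, the interior argument is identical in structure, using the $\gamma$-equation in~(\ref{curvev}) and the fact that $\gamma > K' \geq \sup \alpha$ forces every summand non-positive. The novel feature is the core, where one does not have $\alpha = \gamma$ in general. There I would instead invoke the core evolution equation from Lemma~\ref{curvevcore}, namely $\gamma_t = 2\alpha \gamma_{ss} - 2\alpha^2 \gamma$, and observe that at a first-touch point of the analogous $\gamma$-barrier, $\gamma_{ss} \leq 0$ while $\alpha > 0$ (Lemma~\ref{alphalb}) and $\gamma \geq 0$, so both terms are non-positive and the $-\delta$ slack from the barrier completes the contradiction.

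The main obstacle is precisely this core analysis for $\gamma$: unlike $\beta$, $\gamma$ is not pinned to $\alpha$ by an algebraic identity at $s = 0$, so one must work at the PDE level and invoke the reduced core evolution equation together with the positivity of $\alpha$ from Lemma~\ref{alphalb}. Once the signs at the core are controlled, the interior maximum principle handles the rest.
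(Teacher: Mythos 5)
Your proposal is correct and follows essentially the same barrier-and-maximum-principle argument as the paper: at a first-touch point one has $\beta \geq \alpha$ (resp.\ $\gamma \geq \alpha$), which makes the reaction term in~(\ref{curvev}) non-positive, and the $\delta$-slack gives the contradiction. The one small deviation is at the core for $\beta$: you shortcut via the algebraic identity $\alpha = \beta$ there, whereas the paper treats $\beta$ by the same core-PDE argument it uses for $\gamma$ (Lemma~\ref{curvevcore}); both are valid, and your version is slightly cleaner for that half of the claim.
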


\begin{proof}
We first consider the case of $\gamma$. The key observation is that at an
interior maximum $(s,t)$ with $\gamma(s,t)\geq\alpha$, we have
\[
\gamma_{t}(s,t)\leq2\beta(v_{s}^{2}(\alpha-\gamma)-\alpha\beta)\leq
-2\alpha\beta\gamma\leq0.
\]
This follows from the evolution equation for $\gamma$ after observing that at
such a point, one has $\alpha\gamma_{ss}\leq0$ and $\gamma_{s}=0$. The
corresponding fact at the core follows from the evolution equations in
Lemma~\ref{curvevcore}.

Now for $\delta>0$, define $\gamma_{\delta}(s,t)=\gamma(s,t)-\delta(t+1)$.
Then $\gamma_{\delta}$ is initially smaller than $K^{\prime}$. If there is a
first time $t$ with $\gamma_{\delta}(s,t)=K^{\prime}$, then by the above we
have $(\gamma_{\delta})_{t}\leq-\delta$ at such a point. But we must have
$(\gamma_{\delta})_{t}\geq0$, a contradiction. Since $\delta>0$ was arbitrary
and $\gamma$ is controlled on the boundary, the lemma follows.

The proof for $\beta$ is analogous.
\end{proof}

Notice that the estimates of Lemmas~\ref{alphalb} and \ref{betagammaub} do not
give \emph{a priori} bounds (in terms of the initial data) on the curvatures,
as they ultimately rely on the upper bound attained by $\alpha$ over the
course of the evolution. We now show that an upper bound is in fact the
quantity $K_{0}$.

\begin{lemma}
\label{alphaub}For as long as the flow exists, $\alpha(s,t)\leq K_{0}$.
\end{lemma}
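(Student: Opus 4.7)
The plan is to couple $\alpha$ to $\beta$ and $\gamma$ via a first-order identity at the spatial maxima of $\alpha$, then run a simultaneous barrier argument on all three curvatures. A direct maximum-principle approach on $\alpha$ alone fails: the reaction term $2\alpha(\alpha^{2}-2\beta\gamma)$ in~(\ref{curvev}) can be positive, so there is no reason to expect $\alpha_{t}\leq 0$ at a max of $\alpha$ by itself.

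The first step is an algebraic identity. Since $\alpha=u_{s}v_{s}$, we have $\alpha_{s}=u_{ss}v_{s}+u_{s}v_{ss}$; substituting $u_{ss}=\gamma-u_{s}^{2}$ and $v_{ss}=\beta-v_{s}^{2}$ gives
\[
\alpha_{s}=\beta u_{s}+\gamma v_{s}-\alpha(u_{s}+v_{s}).
\]
Hence at any interior point $(s,t)$ with $s>0$ and $\alpha_{s}(s,t)=0$,
\[
\alpha=\frac{\beta u_{s}+\gamma v_{s}}{u_{s}+v_{s}}.
\]
Negative sectional curvature together with the core conditions $f_{s}(0)=2\pi>0$ and $g_{s}(0)=0$, combined with $f_{ss},g_{ss}>0$, give $u_{s},v_{s}>0$ on $(0,s_{0}]$. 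The displayed expression is therefore a convex combination with strictly positive weights, and so $\alpha\leq\max(\beta,\gamma)$ at every interior spatial maximum of $\alpha$.

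With this in hand, I would run a simultaneous barrier argument. For $\delta>0$ set $\alpha_{\delta}=\alpha-\delta(t+1)$, and similarly $\beta_{\delta}, \gamma_{\delta}$; at $t=0$ each is strictly less than $K_{0}$. Suppose, for contradiction, that $t_{0}>0$ is the first time at which $\max\{\alpha_{\delta},\beta_{\delta},\gamma_{\delta}\}(s_{0},t_{0})=K_{0}$ for some $s_{0}$. Then $\alpha,\beta,\gamma\leq K_{0}+\delta(t_{0}+1)$ everywhere at time $t_{0}$. If $\beta_{\delta}(s_{0},t_{0})=K_{0}$ (the $\gamma_{\delta}$ case is symmetric), then $\beta\geq\alpha$ and $\beta\geq\gamma$ at $(s_{0},t_{0})$, and $\beta$ is at a spatial maximum there, so $\beta_{s}=0$ and $\beta_{ss}\leq 0$ (using even reflection at the core, and the core evolution of Lemma~\ref{curvevcore} when $s_{0}=0$). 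Together with the strict lower bound $\alpha>0$ from Lemma~\ref{alphalb}, every term of the $\beta$-equation in~(\ref{curvev}) is then nonpositive, so $\beta_{t}\leq 0$ and $(\beta_{\delta})_{t}\leq-\delta<0$, contradicting the fact that $(\beta_{\delta})_{t}(s_{0},t_{0})\geq 0$. If instead $\alpha_{\delta}$ alone attains $K_{0}$ at $(s_{0},t_{0})$, then $\alpha>\max(\beta,\gamma)$ there; but $s_{0}$ cannot lie at the core (where $\alpha=\beta$) nor at the boundary (where $\alpha$ is controlled by the prescribed Dirichlet data), so it is interior, and the first-order identity above forces $\alpha\leq\max(\beta,\gamma)<\alpha$, a contradiction. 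Letting $\delta\to 0$ gives $\alpha,\beta,\gamma\leq K_{0}$, and in particular $\alpha\leq K_{0}$.

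The main obstacle is precisely the potentially positive $\alpha$-reaction $2\alpha(\alpha^{2}-2\beta\gamma)$: without extra input, the $\alpha$-equation admits no scalar barrier. The decisive observation is that the spatial critical-point condition at a maximum of $\alpha$ already pins $\alpha$ to a convex combination of $\beta$ and $\gamma$, which transfers the analytic difficulty to the equations for $\beta$ and $\gamma$; those reaction terms carry the right sign exactly when the corresponding curvature realizes the joint supremum, making the simultaneous barrier argument close.
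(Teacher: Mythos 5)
Your proof is correct, but it follows a genuinely different route from the paper. The paper first proves Lemma~\ref{betagammaub} (that $\sup\{\beta,\gamma\}\leq\max\{K_{0},\sup\alpha\}$), then runs a barrier argument on $\alpha$ alone: at the first time $\alpha_{\delta}:=\alpha-\delta(t+1)$ reaches $K_{0}$, Lemma~\ref{betagammaub} forces $\alpha\geq\beta,\gamma$ there, and the contradiction is closed by algebraically rewriting the reaction term (equation~(\ref{alphaev})) into the manifestly nonpositive form $-2\alpha[u_{s}^{2}(\alpha-\beta)+v_{s}^{2}(\alpha-\gamma)+(\alpha-\beta)(\alpha-\gamma)+\beta\gamma]$. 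You instead run a simultaneous barrier on $(\alpha,\beta,\gamma)$ and never invoke Lemma~\ref{betagammaub}: when $\beta$ or $\gamma$ realizes the joint supremum, the corresponding reaction in~(\ref{curvev}) is immediately nonpositive; and when $\alpha$ alone does, you close the argument at the \emph{first-order} level, observing that the identity $\alpha_{s}=\beta u_{s}+\gamma v_{s}-\alpha(u_{s}+v_{s})$ (which the paper also records, but uses only to derive~(\ref{alphaev})) exhibits $\alpha$ at an interior critical point as a strict convex combination of $\beta$ and $\gamma$, directly forbidding $\alpha>\max(\beta,\gamma)$ without touching the reaction term at all. Both routes rest on the same identity; yours exploits it more economically, is self-contained, and proves the joint upper bound $\alpha,\beta,\gamma\leq K_{0}$ (the paper's Theorem~\ref{univub}) in one stroke rather than assembling it from two lemmas. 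The ingredients you invoke---$\alpha>0$ from Lemma~\ref{alphalb} to get $u_{s},v_{s}>0$ on $(0,s_{0}]$, even reflection and Lemma~\ref{curvevcore} at the core, Dirichlet control at the outer boundary---are available at this point in the paper's development, so there is no circularity.
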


\begin{proof}
Define $\alpha_{\delta}(s,t)=\alpha(s,t)-\delta(t+1)$. Then $\alpha_{\delta
}(\cdot,0)<K_{0}$. Suppose there is a first time $t_{0}$ at which
$\alpha_{\delta}$ attains the value $K_{0}$. We claim that the value
$K_{0}+\delta(t_{0}+1)$ attained by $\alpha$ at $\delta_{0}$ is the maximum
value it attains on the interval $[0,t_{0}]$. To see this, suppose there were
some $t<t_{0}$ such that $\alpha(s,t)=K_{0}+\delta(t_{0}+1)$ for some $s$.
Then $\alpha_{\delta}(s,t)=K_{0}+\delta(t_{0}-t)>K_{0}$, a contradiction to
our hypothesis that $t_{0}$ is the first time $\alpha_{\delta}$ attains the
value $K_{0}$. Thus by Lemma~\ref{betagammaub}, the values of $\beta$ and
$\gamma$ are bounded above by $K_{0}+\delta(t_{0}+1)$ on the interval
$[0,t_{0}]$. In particular (this is the important point), at the maximum of
$\alpha$ at time $t_{0}$, we have $\alpha\geq\beta$ and $\alpha\geq\gamma$.
Appealing to the evolution equation for $\alpha$ recorded in (\ref{curvev}),
one may obtain a contradiction in the usual way if the quantity $\alpha
^{2}-2\beta\gamma<0$. The difficult case (and the reason we have not given
\emph{a priori }bounds to this point) is when this inequality does not hold.
To deal with this, we use the fact that $\alpha=u_{s}v_{s}$ to rewrite the
evolution equation for $\alpha$. Recall that $\alpha_{s}=\beta u_{s}+\gamma
v_{s}-\alpha(u_{s}+v_{s})$. Using this, we rewrite the evolution equation for
$\alpha$ as follows:
\begin{equation}%
\begin{array}
[c]{rl}%
\alpha_{t}= & \alpha\alpha_{ss}+[\beta u_{s}+\gamma v_{s}]\alpha_{s}%
+2\alpha\lbrack(u_{s}+v_{s})\alpha_{s}+\alpha^{2}-2\beta\gamma]\\
= & \alpha\alpha_{ss}+[\beta u_{s}+\gamma v_{s}]\alpha_{s}+2\alpha\lbrack
u_{s}^{2}(\beta-\alpha)+v_{s}^{2}(\gamma-\alpha)+\alpha\beta+\alpha
\gamma-\alpha^{2}-2\beta\gamma]\\
= & \alpha\alpha_{ss}+[\beta u_{s}+\gamma v_{s}]\alpha_{s}-2\alpha\lbrack
u_{s}^{2}(\alpha-\beta)+v_{s}^{2}(\alpha-\gamma)+(\alpha-\beta)(\alpha
-\gamma)+\beta\gamma].
\end{array}
\label{alphaev}%
\end{equation}
At the maximum for $\alpha$ at time $t_{0}$, we have already observed that
$\beta\leq\alpha$ and $\gamma\leq\alpha$. Therefore the zero-order term of the
differential equation (\ref{alphaev}) is nonpositive, and a contradiction is
obtained using the maximum principle in the standard way. Since $\delta>0$ was
arbitrary, the Lemma is proved if the maximum occurs in the interior. If the
maximum occurs at the core, we note since $\alpha=\beta$ there and $\beta\leq
K_{0}+\delta(t_{0}+1)$, this must also be a local maximum for $\beta$. Using
the evolution equation for $\beta$ at the core, we find that $\beta_{t}%
=\alpha_{t}\leq0$, also contradiction. Hence the lemma is proved.
\end{proof}

The following theorem is an immediate corollary of Lemma~\ref{betagammaub} and
Lemma~\ref{alphaub}.

\begin{theorem}
[Part~(2) of Theorem~\ref{curvature estimates}]\label{univub} For as long as
the flow exists, $\alpha$, $\beta$, and $\gamma$ are bounded above by $K_{0}$.
\end{theorem}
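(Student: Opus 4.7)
The plan is to assemble Lemmas \ref{alphaub} and \ref{betagammaub} into a single unconditional upper bound. Lemma \ref{alphaub} already gives the key nontrivial estimate $\alpha(s,t) \leq K_0$ for as long as the flow exists; this handles the $\alpha$ portion of the theorem outright. What remains is to upgrade the conditional bound for $\beta$ and $\gamma$ supplied by Lemma \ref{betagammaub}, which only bounds them by $K' = \max\{K_0, \sup \alpha\}$, into an unconditional bound by $K_0$ itself.

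Concretely, I would fix an arbitrary $T$ in the existence interval and proceed in two steps. First, invoke Lemma \ref{alphaub} on $[0,T]$ to obtain $\sup_{M^{3} \times [0,T]} \alpha \leq K_0$. Substituting this into the definition $K' = \max\{K_0, \sup \alpha\}$ from Lemma \ref{betagammaub} collapses the maximum and yields $K' = K_0$. Second, apply Lemma \ref{betagammaub} directly: it then delivers $\sup_{M^{3} \times [0,T]} \{\beta, \gamma\} \leq K_0$. Since $T$ was arbitrary within the existence interval, letting $T$ range over all such times completes the proof.

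There is no genuine analytic obstacle at this stage; this step is essentially a bookkeeping exercise. The substantive difficulties were already absorbed by the earlier lemmas: Lemma \ref{alphaub} had to handle the delicate case $\alpha^{2} - 2\beta\gamma \geq 0$ by rewriting the evolution equation for $\alpha$ via the identity $\alpha_s = \beta u_s + \gamma v_s - \alpha(u_s + v_s)$ to expose a manifestly nonpositive zero-order term, and Lemma \ref{betagammaub}'s maximum principle argument in turn depended on the positivity of $\alpha$ provided by Lemma \ref{alphalb} to preserve parabolicity. Once those inputs are in hand, the present theorem follows by direct combination.
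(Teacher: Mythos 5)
Your proposal is correct and follows the same route as the paper, which states this theorem is an immediate corollary of Lemmas~\ref{alphaub} and \ref{betagammaub}: Lemma~\ref{alphaub} gives $\alpha \leq K_0$, and substituting this into the bound $K' = \max\{K_0, \sup\alpha\}$ from Lemma~\ref{betagammaub} yields $\beta, \gamma \leq K_0$. Your observation that the real work was absorbed into the earlier lemmas, with no circularity since Lemma~\ref{betagammaub} is a conditional bound that Lemma~\ref{alphaub} uses before this theorem closes the loop, is exactly right.
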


The universal upper bound of Theorem~\ref{univub} may be used to give a
universal lower bound for the sectional curvatures, as in Lemma~\ref{alphalb}.

\begin{theorem}
[Part~(3) of Theorem~\ref{curvature estimates}]\label{univlb} Let $L_{0}%
=\inf_{M\times\{0\}}\{\,\alpha,\beta,\gamma\,\}$. For as long as the flow
exists, $\alpha$, $\beta$, and $\gamma$ are bounded below by $L_{0}%
e^{-4K_{0}^{2}t}$.
\end{theorem}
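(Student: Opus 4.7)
The plan is to bootstrap: first upgrade Lemma~\ref{alphalb} to an \emph{a priori} statement for $\alpha$ using the now-available universal upper bound, then transfer this lower bound to $\beta$ and $\gamma$ by a comparison-type barrier argument, exploiting the fact that whenever $\beta$ (or $\gamma$) threatens to dip below the $\alpha$ barrier, we automatically recover a favorable sign for the otherwise-troublesome term in its evolution equation.

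First I would dispense with $\alpha$: Part~(2) of Theorem~\ref{curvature estimates} shows $\sup_{M\times[0,T]}\{\beta,\gamma\}\le K_0$, so we may take $K=K_0$ in Lemma~\ref{alphalb}, and since $\alpha_{\min}(0)\ge L_0$, we conclude $\alpha\ge L_0 e^{-4K_0^2 t}$. Call this barrier $B(t):=L_0 e^{-4K_0^2 t}$.

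Next I would treat $\beta$ by the standard perturbed-barrier maximum principle, mimicking the structure of Lemma~\ref{alphalb}. Fix $\delta>0$ and consider $\beta_\delta:=\beta-B+\delta(t+1)$, which is positive at $t=0$. Suppose $t_0\in(0,T]$ is the first time and $s_0$ the point at which $\beta_\delta=0$, so that $(\beta_\delta)_t(s_0,t_0)\le 0$ and $\beta(s_0,t_0)=B(t_0)-\delta(t_0+1)<B(t_0)$. There are three cases. If $s_0$ is an interior point away from the core, then $\beta_s=0$ and $\alpha\beta_{ss}\ge 0$ by Part~(1), and from (\ref{curvev}b),
\[
\beta_t \ge 2\beta\bigl[u_s^2(\alpha-\beta)-\alpha\gamma\bigr].
\]
Here is the key observation: the $\alpha$ bound from the previous step forces $\alpha(s_0,t_0)\ge B(t_0)>\beta(s_0,t_0)$, so $u_s^2(\alpha-\beta)\ge 0$ and the problematic term vanishes from the analysis, leaving $\beta_t\ge-2\beta\alpha\gamma\ge -2K_0^2\beta$. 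Combined with $\beta_t\le B'(t_0)-\delta=-4K_0^2 B(t_0)-\delta$, this gives the contradiction $-2K_0^2 B(t_0)\ge \delta(1+2K_0^2(t_0+1))>0$. If the minimum is at the core, then $\alpha=\beta$ there (by rotational symmetry, as noted after (\ref{core})), and $\beta=\alpha\ge B(t_0)$ again contradicts $\beta<B(t_0)$. The spatial boundary case is handled exactly as in Lemma~\ref{alphalb}, since the Dirichlet conditions control the curvatures there. Letting $\delta\searrow 0$ yields $\beta\ge B$. The argument for $\gamma$ is entirely parallel, using (\ref{curvev}c) in the interior (where $\alpha>\gamma$ forces $v_s^2(\alpha-\gamma)\ge 0$) and the core equation $\gamma_t=2\alpha\gamma_{ss}-2\alpha^2\gamma\ge -2K_0^2\gamma$ from Lemma~\ref{curvevcore} at the core.

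The main obstacle is the sign of $u_s^2(\alpha-\beta)$ in the evolution of $\beta$ (and of $v_s^2(\alpha-\gamma)$ in that of $\gamma$): we have no \emph{a priori} pointwise bound on $u_s$ or $v_s$, so this term is only controllable when $\alpha-\beta\ge 0$, respectively $\alpha-\gamma\ge 0$. Fortunately, this is precisely the regime we find ourselves in at a would-be first failure of the barrier, thanks to the lower bound on $\alpha$ established in the first step; this is the essential reason the argument for $\alpha$ must be completed before those for $\beta$ and $\gamma$.
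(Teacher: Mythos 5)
Your proposal is correct and follows essentially the same strategy as the paper: establish the lower bound for $\alpha$ first via Lemma~\ref{alphalb} (with $K=K_0$ now available from Part~(2)), then transfer it to $\beta$ and $\gamma$ by a barrier argument, using precisely the fact that at a would-be first failure $\alpha$ already exceeds the barrier, so $\alpha-\beta\ge 0$ (resp.\ $\alpha-\gamma\ge 0$) eliminates the sign-indefinite $u_s^2$ (resp.\ $v_s^2$) term. The only cosmetic differences are your choice of perturbation (adding $\delta(t+1)$ rather than shifting the barrier by $-\delta$) and your shortcut at the core for $\beta$ using the identity $\alpha=\beta$ there, where the paper appeals to the core evolution equation of Lemma~\ref{curvevcore}.
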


\begin{proof}
The case of $\alpha$ follows immediately from Lemma~\ref{alphalb}, after
noting that Theorem~\ref{univub} implies that the constant $K$ in the Lemma is
less than or equal to $K_{0}$, and that $\alpha_{\min}(0)\geq L_{0}$.

We next address the case of $\gamma$. As in the proof of Lemma \ref{alphalb},
for $\delta>0$ we define a barrier function $A(t)=L_{0}e^{-4K_{0}^{2}t}%
-\delta$. Then $\gamma>A$ at $t=0$. If there is a first time $t>0$ such that
$\gamma_{\min}(t)=A(t)$, then at such a point one has
\[
\left\{
\begin{array}
[c]{l}%
\gamma_{t}\leq A^{\prime},\\
\alpha\gamma_{ss}\geq0,\\
\gamma_{s}=0.
\end{array}
\right.
\]
Note that at this point, one has $\alpha>\gamma$, since the inequality holds
for $\alpha$. If the minimum occurs in the interior, then by the evolution
equation satisfied by $\gamma$ there and the fact that $\alpha\leq K_{0}$ and
$\beta\leq K_{0}$, we have
\[
A^{\prime}=-4K_{0}^{2}(A+\delta)\geq\gamma_{t}>-2\alpha\beta\gamma\geq
-2K_{0}^{2}A.
\]
This is a contradiction. If the minimum occurs at the core, appealing to the
evolution equation there (see Lemma~\ref{curvevcore}) yields
\[
A^{\prime}=-4K_{0}^{2}(A+\delta)\geq\gamma_{t}>-2\alpha^{2}\gamma\geq
-2K_{0}^{2}A,
\]
again yielding a contradiction. Since $\delta>0$ is arbitrary the result is
proved for $\gamma$.

The case of $\beta$ is analogous.
\end{proof}

Theorems~\ref{univub} and \ref{univlb} combine to give bounds for the
sectional curvatures that hold for all positive times.

\smallskip

Theorem\ \ref{univlb} has another important consequence:

\begin{corollary}
[Part~(4) of Theorem~\ref{curvature estimates}]For as long as a the solution
of \textsc{xcf} exists, negative sectional curvature is preserved.
\end{corollary}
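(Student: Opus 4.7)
The plan is to read this off directly from Part~(3) of Theorem~\ref{curvature estimates}, which has just been proved. The only content of the corollary is the observation that the strictly positive lower bound furnished there prevents any of the principal sectional curvatures from reaching zero in finite time.

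More specifically, I would proceed as follows. First, recall the setup: the initial metric $G_0$ was assumed to have strictly negative sectional curvature, so the quantity
\[
L_0 = \inf_{M \times \{0\}} \{\alpha, \beta, \gamma\}
\]
satisfies $L_0 > 0$ (the infimum is attained because $M$ is compact and $\alpha,\beta,\gamma$ extend continuously to the core by the analysis in \S\ref{CollectedProofs}). Second, apply Theorem~\ref{univlb} to conclude that, for every $t$ in the interval of existence,
\[
\alpha(s,t),\ \beta(s,t),\ \gamma(s,t) \ \geq\ L_0 e^{-4K_0^2 t} \ >\ 0.
\]
Third, translate back to signed sectional curvatures: since $K_{\lambda\mu} = -\alpha$, $K_{\lambda r} = -\beta$, and $K_{\mu r} = -\gamma$, this lower bound is equivalent to
\[
K_{\lambda\mu}(s,t),\ K_{\lambda r}(s,t),\ K_{\mu r}(s,t) \ \leq\ -L_0 e^{-4K_0^2 t} \ <\ 0,
\]
which says exactly that negative sectional curvature is preserved.

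There is no real obstacle here; all the work has already been done in establishing Theorem~\ref{univlb}. The one subtlety worth flagging is that the preservation is not merely qualitative but quantitative: the curvatures are bounded away from zero by a function that decays only exponentially in $t$, which in particular is uniform on any compact time interval. This quantitative form is what will matter downstream, since it is also what guarantees (via Part~(1)) that the \textsc{xcf} operator stays uniformly elliptic on bounded time intervals, and hence feeds into the long-time existence argument of Theorem~\ref{long-long-time}.
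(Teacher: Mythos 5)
Your argument is exactly the paper's: the corollary is stated immediately after Theorem~\ref{univlb} as a direct consequence, with the positive lower bound $L_0 e^{-4K_0^2 t}$ on $\alpha,\beta,\gamma$ forcing the signed sectional curvatures to remain strictly negative. Nothing further is needed, and your added remark about the quantitative nature of the bound is a fair observation but not part of the proof itself.
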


\smallskip

We can slightly improve the lower bound for $\alpha$.

\begin{lemma}
The evolution equation for $\alpha$ in the interior may be written in the
following forms:
\begin{subequations}
\label{alphaevs}%
\begin{align}
&  \alpha_{t}=\alpha\alpha_{ss}+[\beta u_{s}+\gamma v_{s}+2\alpha(u_{s}%
+v_{s})]\alpha_{s}+2\alpha(\alpha^{2}-2\beta\gamma)\\
&  \alpha_{t}=\alpha\alpha_{ss}+[\gamma v_{s}-3\beta u_{s}+2\alpha(u_{s}%
+v_{s})]\alpha_{s}+4u_{s}^{2}\beta(\beta-\alpha)-2\alpha^{2}(2\beta-\alpha)\\
&  \alpha_{t}=\alpha\alpha_{ss}+[\beta u_{s}-3\gamma v_{s}+2\alpha(u_{s}%
+v_{s})]\alpha_{s}+4v_{s}^{2}\gamma(\gamma-\alpha)-2\alpha^{2}(2\gamma
-\alpha)\\
&  \alpha_{t}=\alpha\alpha_{ss}+[\beta u_{s}+\gamma v_{s}]\alpha_{s}%
-2\alpha\lbrack u_{s}^{2}(\alpha-\beta)+v_{s}^{2}(\alpha-\gamma)+(\alpha
-\beta)(\alpha-\gamma)+\beta\gamma].
\end{align}

\end{subequations}
\end{lemma}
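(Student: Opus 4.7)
The plan is purely algebraic: form (a) is identical to equation (\ref{curvev}a) with the coefficient of $\alpha_s$ rewritten in a different order, so there is nothing to prove for (a). Forms (b), (c), (d) are all obtained by transferring a piece of the first-order coefficient over to the zero-order term by means of a single substitution.

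The one identity doing all the work is
\[
\alpha_s \;=\; \beta u_s + \gamma v_s - \alpha(u_s+v_s),
\]
obtained by differentiating $\alpha = u_s v_s$ and then using $u_{ss}=\gamma-u_s^2$ and $v_{ss}=\beta-v_s^2$ (the definitions of $\gamma$ and $\beta$) to eliminate second derivatives. I will also repeatedly use $u_s v_s = \alpha$ to turn mixed products into scalar factors of $\alpha$.

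To get (d), I split the coefficient in (a) as $[\beta u_s+\gamma v_s]\alpha_s + 2\alpha(u_s+v_s)\alpha_s$, substitute the identity into the second piece to obtain
\[
(u_s+v_s)\alpha_s \;=\; u_s^2(\beta-\alpha)+v_s^2(\gamma-\alpha)+\alpha(\beta+\gamma-2\alpha),
\]
combine with the existing zero-order term $2\alpha(\alpha^2-2\beta\gamma)$, and then repackage using the identity $\alpha\beta+\alpha\gamma-\alpha^2-2\beta\gamma = -(\alpha-\beta)(\alpha-\gamma)-\beta\gamma$. This reproduces the bracketed expression in (d). (This is the same manipulation already carried out inside the proof of Lemma~\ref{alphaub}, so (d) is really just its first displayed equation restated as a free-standing form.)

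For (b), its first-order coefficient differs from that of (a) by exactly $-4\beta u_s$, so I subtract $4\beta u_s\,\alpha_s$ from the first-order term and add it back into the zero-order. Expanding $4\beta u_s\,\alpha_s$ via the key identity and simplifying $u_s v_s = \alpha$ yields
\[
4\beta u_s\,\alpha_s \;=\; 4u_s^2\beta(\beta-\alpha) + 4\alpha\beta(\gamma-\alpha),
\]
and adding this to $2\alpha(\alpha^2-2\beta\gamma)$ collapses to $4u_s^2\beta(\beta-\alpha)-2\alpha^2(2\beta-\alpha)$, which is the zero-order term of (b). Form (c) is obtained by the same calculation with the symmetry $u_s\leftrightarrow v_s$, $\beta\leftrightarrow\gamma$. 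There is no real obstacle here; the entire lemma is bookkeeping, and the only place one has to be careful is to invoke $u_s v_s=\alpha$ at the right moments so that the zero-order terms consolidate into the stated polynomials in $\alpha,\beta,\gamma$ (with the remaining $u_s^2$ or $v_s^2$ factors appearing only in the sign-definite pieces $(\beta-\alpha)$ or $(\gamma-\alpha)$ that make these reformulations useful for the maximum-principle arguments elsewhere).
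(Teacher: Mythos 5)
Your proof is correct and follows essentially the same route as the paper's: (a) is just a reordering of the coefficient in equation~(\ref{curvev}a), (d) is the manipulation already displayed in the proof of Lemma~\ref{alphaub} as equation~(\ref{alphaev}), and (b), (c) are obtained exactly as in the paper by peeling off $4\beta u_s\alpha_s$ (respectively $4\gamma v_s\alpha_s$) from the first-order coefficient, applying the identity $\alpha_s=\beta u_s+\gamma v_s-\alpha(u_s+v_s)$ together with $u_s v_s=\alpha$, and absorbing the result into the zero-order term. The only cosmetic difference is that you derive the $\alpha_s$ identity from scratch (via $u_{ss}=\gamma-u_s^2$, $v_{ss}=\beta-v_s^2$) where the paper simply recalls it.
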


\begin{proof}
Equations (a) and (d) above were previously obtained; they are equations
(\ref{curvev}a) and (\ref{alphaev}), respectively. Equation (b) is obtained
from (a) by separating off a factor of $4\beta u_{s}\alpha_{s}$, so that one
obtains
\[
\alpha_{t}=\alpha\alpha_{ss}+[\gamma v_{s}-3\beta u_{s}+2\alpha(u_{s}%
+v_{s})]\alpha_{s}+4\beta u_{s}\alpha_{s}+2\alpha^{3}-4\alpha\beta\gamma.
\]
It is easily computed that $\beta u_{s}\alpha_{s}=\beta u_{s}^{2}(\beta
-\alpha)+\alpha\beta\gamma-\alpha^{2}\beta$. Substituting this in the equation
above and simplifying yields the result. Equation (c) is obtained analogously,
but by separating off a factor of $4\gamma v_{s}\alpha_{s}$ instead of $4\beta
u_{s}\alpha_{s}$.
\end{proof}

We use the new equations below to improve the lower bound on the decay of
$\alpha$ from exponential to polynomial. Recall that $K_{0}$ is the supremum
of the curvatures of the initial metric, and that $L_{0}$ is the infimum.

\begin{theorem}
[Part~(5) of Theorem~\ref{curvature estimates}]Over the course of the
evolution, one has
\[
\alpha\geq\frac{L_{0}}{4K_{0}L_{0}t+1}.
\]

\end{theorem}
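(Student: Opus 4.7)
The plan is to invoke the parabolic minimum principle with the explicit barrier
\[
A(t) := \frac{L_0}{4K_0 L_0 t + 1},
\]
which satisfies $A(0)=L_0$ and the Riccati-type ODE $A'(t)=-4K_0 A(t)^2$. The matching of this Riccati structure with the evolution of $\alpha$ at a spatial minimum is the whole point of recasting the equation into the alternative forms (\ref{alphaevs}b)--(\ref{alphaevs}d). As in Lemmas~\ref{alphalb} and \ref{univlb}, a small perturbation $A_\delta(t):=A(t)-\delta(t+1)$ will be used to make the first-contact inequality strict, and the boundary values of $\alpha$ will be handled exactly as in those earlier lemmas.

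The core of the argument is the following observation: at any interior or core point $(s_*,t_*)$ at which $\alpha(\cdot,t_*)$ achieves its spatial minimum, one has
\[
\alpha_t \geq -4K_0\,\alpha^2.
\]
To prove this, recall from Theorem~\ref{univub} that $\alpha,\beta,\gamma\leq K_0$, and note that at a spatial minimum $\alpha_s=0$ and $\alpha_{ss}\geq 0$. We then split cases on the relative sizes of $\alpha,\beta,\gamma$. If $\beta\geq \alpha$, then (\ref{alphaevs}b) gives
\[
\alpha_t \;\geq\; 4u_s^2\beta(\beta-\alpha)\;-\;2\alpha^2(2\beta-\alpha)\;\geq\;-4K_0\alpha^2,
\]
since the gradient-squared term is nonnegative and $2\beta-\alpha\leq 2K_0$. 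If $\gamma\geq \alpha$, the symmetric computation using (\ref{alphaevs}c) yields the same bound. In the remaining case $\beta,\gamma<\alpha$, equation (\ref{alphaevs}a) gives $\alpha_t\geq 2\alpha(\alpha^2-2\beta\gamma)$, and since $\beta<\alpha$ and $\gamma\leq K_0$ we have $\beta\gamma<\alpha K_0$, whence $\alpha_t\geq 2\alpha^3-4K_0\alpha^2\geq-4K_0\alpha^2$. At the core, Lemma~\ref{curvevcore} directly gives $\alpha_t=4\alpha\alpha_{ss}+2\alpha^3-4\alpha^2\gamma\geq 2\alpha^3-4K_0\alpha^2\geq -4K_0\alpha^2$ at a minimum. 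This finishes the observation.

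With the observation in hand, the perturbation argument proceeds in the familiar way. For $\delta>0$, one has $\alpha(\cdot,0)\geq L_0>L_0-\delta=A_\delta(0)$, so if $\alpha\geq A_\delta$ failed there would be a first time $t_*$ and a point $s_*$ (interior or core, by the boundary control of $\alpha$) at which $\alpha(s_*,t_*)=A_\delta(t_*)$, forcing $\alpha_t\leq A_\delta'(t_*)=A'(t_*)-\delta$. However, since $\alpha=A_\delta<A$ at that point, $\alpha^2<A^2$, and the observation yields
\[
\alpha_t \;\geq\; -4K_0\alpha^2 \;>\; -4K_0 A^2 \;=\; A'(t_*) \;>\; A_\delta'(t_*),
\]
a contradiction. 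Hence $\alpha\geq A_\delta$ for every $\delta>0$, and letting $\delta\searrow 0$ yields the stated bound $\alpha\geq A$.

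The principal technical obstacle is establishing the sharp inequality $\alpha_t\geq-4K_0\alpha^2$ at a minimum. The naive form (\ref{alphaevs}a) only gives the weaker $\alpha_t\geq 2\alpha(\alpha^2-2\beta\gamma)\geq -4K_0^2\alpha$ (recovering merely the exponential decay of Theorem~\ref{univlb}), because the zero-order term $-4\alpha\beta\gamma$ is only controllable by a linear-in-$\alpha$ quantity. The case split above is precisely what exchanges that offending $\beta\gamma$ for either a sign-correct gradient-squared term (in cases~(b),(c)) or the milder product $\alpha K_0$ (in case~(a) when $\alpha$ already dominates $\beta,\gamma$), which is what upgrades the decay from exponential to polynomial.
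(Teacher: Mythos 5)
Your proof is correct and follows essentially the same approach as the paper: a barrier argument for $A(t)=L_0/(4K_0L_0t+1)$ paired with the case split on whether $\alpha$ is dominated by $\beta$ or $\gamma$ (using the rewritten evolution equations~(\ref{alphaevs}b), (\ref{alphaevs}c)) or dominates both (using~(\ref{alphaevs}a)), together with the core equation from Lemma~\ref{curvevcore}. The only cosmetic differences are that you perturb the barrier by $\delta(t+1)$ whereas the paper absorbs a constant $\delta$ into $A$ directly, and that you correctly cite~(\ref{alphaevs}b) and~(\ref{alphaevs}c) where the paper's text contains a small typographical slip (it writes~(\ref{alphaevs}a) and~(\ref{alphaevs}b) while actually using the formulas from~(b) and~(c)); your isolation of the one-line observation $\alpha_t\geq-4K_0\alpha^2$ at a spatial minimum is a cleaner way to organize the same computation.
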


\begin{proof}
For $\delta>0$, we define the barrier function
\[
A(t)=\frac{L_{0}}{4K_{0}L_{0}t+1}-\delta,
\]
and note that $\alpha(\cdot,0)>A(0)$. Suppose there is a first time $t>0$ at
which $\alpha(s,t)=A(t)$ for some $s$. If this point $s$ is in the interior,
the fact that it is a minimum for $\alpha$ implies that $\alpha_{ss}\geq0$ and
$\alpha_{s}=0$ there. Furthermore, we must have
\[
\alpha_{t}\leq A^{\prime}(t)=-4K_{0}(A+\delta)^{2}.
\]
At this point the analysis breaks up into three cases depending on the
relationship of $\alpha$ with the other curvatures.

Suppose first that $\alpha(s,t)<\beta(s,t)$. Then using
equation~(\ref{alphaevs}a), we see from the above that
\[
-4K_{0}(A+\delta)^{2}\geq\alpha_{t}=4u_{s}^{2}\beta(\beta-\alpha)-2\alpha
^{2}(2\beta-\alpha)>-4K_{0}\alpha^{2}=-4K_{0}A^{2}.
\]
This is a contradiction.

If $\alpha(s,t)<\gamma(s,t)$, an analogous analysis using
equation~(\ref{alphaevs}b) yields a contradiction in the same way.

It remains to consider the case that $\alpha\geq\beta,\gamma$. In this case,
we use the original evolution equation~(\ref{curvev}) for $\alpha$. This
yields
\[
-4K_{0}(A+\delta)^{2}\geq\alpha_{t}=2\alpha(\alpha^{2}-2\beta\gamma
)\geq-2\alpha^{3}\geq-2K_{0}A^{2},
\]
and again a contradiction is obtained.

Finally, if the minimum occurs at the core, appealing to the evolution
equation there (see Lemma~\ref{curvevcore}) yields
\[
\alpha_{t}=-2\alpha^{2}(2\gamma-\alpha)\geq-4K_{0}\alpha^{2},
\]
and an analysis similar to the above yields a contradiction. Since $\delta>0$
was arbitrary, the result is proved.
\end{proof}

\subsection{Integral convergence to hyperbolic}

Here we prove Theorem~\ref{mon of J}. Recall that we defined
\[
J=\int_{M}(\frac{P}{3}-(\det P)^{\frac{1}{3}})\,\mathrm{dV}%
\]
in (\ref{J defn}), where $P$ here denotes the trace of the Einstein tensor.
Notice that the integrand is nonnegative (by the arithmetic-geometric mean
inequality) and is identically zero if and only if the metric has constant curvature.

The only difficulty is that we are considering a manifold-with-boundary.
Because the proof of monotonicity in \cite{ChowHam} relies on integration by
parts, we must be able to control the boundary terms that arise in our
situation. In what follows, we adopt their notation and use the following result.

\begin{lemma}
[Chow--Hamilton]The evolution of the Einstein tensor under \textsc{xcf} is
\begin{equation}
\frac{\partial}{\partial t}P^{ij}=\nabla_{k}\nabla_{l}(P^{kl}P^{ij}%
-P^{ik}P^{jl})-\det Pg^{ij}-XP^{ij}, \label{ev of P}%
\end{equation}
where $X=g^{ij}X_{ij}$ is the trace of the cross curvature tensor.
\end{lemma}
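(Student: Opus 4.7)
The plan is to derive the evolution equation for $P^{ij}$ directly from the standard variational formulas for the Ricci and scalar curvatures, exploit the special features of dimension three to rewrite all occurring Riemann expressions purely in terms of the Einstein tensor $P$, and then use the twice-contracted second Bianchi identity $\nabla_jP^{ij}=0$ to collapse the resulting expression into the divergence form on the right-hand side.

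First, for a general variation $\partial_t g_{ij}=h_{ij}$ I would write down the well-known linearization
\[
\partial_tR_{ij}=\tfrac12 g^{kl}\bigl(\nabla_k\nabla_ih_{jl}+\nabla_k\nabla_jh_{il}-\nabla_k\nabla_lh_{ij}-\nabla_i\nabla_jh_{kl}\bigr),
\]
with $\partial_tR$ obtained by tracing and accounting for the evolution $\partial_tg^{ij}=-h^{ij}$ when raising indices. Combining these yields an expression for $\partial_tP^{ij}=\partial_t(g^{ia}g^{jb}P_{ab})$ that is second-order linear in $h$ with coefficients involving $g$ and curvature. Specializing to XCF means substituting $h_{ij}=-2X_{ij}$.

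Second, I would translate everything into the tensor $P$ using the fact that in three dimensions the Weyl tensor vanishes, so the full Riemann tensor is algebraically determined by the Einstein tensor:
\[
R_{ijkl}=g_{ik}P_{jl}-g_{il}P_{jk}+g_{jl}P_{ik}-g_{jk}P_{il}-\tfrac{P}{2}(g_{ik}g_{jl}-g_{il}g_{jk}),
\]
where $P=g^{ij}P_{ij}$. Substituting this into $X_{ij}=\tfrac12 P^{uv}R_{iuvj}$ produces an explicit quadratic polynomial in $P$; in particular $X$ and $\det P$ are algebraically related. The same identity also converts each Riemann term arising from commutators of covariant derivatives into a quadratic expression in $P$.

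Third, I would verify by direct expansion that the candidate right-hand side collapses into exactly the same expression. The contracted Bianchi identity yields $\nabla_k\nabla_l(P^{kl}P^{ij})=P^{kl}\nabla_k\nabla_lP^{ij}$ (since $\nabla_lP^{kl}=0$), while
\[
\nabla_k\nabla_l(P^{ik}P^{jl})=(\nabla_kP^{jl})(\nabla_lP^{ik})+P^{jl}[\nabla_k,\nabla_l]P^{ik},
\]
the commutator producing Riemann factors that, by the 3D identity, become algebraic in $P$. Matching the two sides forces the algebraic residue to be $-\det P\,g^{ij}-XP^{ij}$.

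The main obstacle is the bookkeeping: many seemingly distinct terms at intermediate stages must collapse into the compact divergence form $\nabla_k\nabla_l(P^{kl}P^{ij}-P^{ik}P^{jl})$ plus the algebraic correction. The two principles that make this closure happen are the contracted Bianchi identity (which both kills divergences like $\nabla_lP^{kl}$ and enables integration-by-parts-style rearrangement at the tensor level) and the three-dimensional Weyl-free Riemann identity (which reduces every quadratic invariant of Riemann to the two scalars $X$ and $\det P$). Neither ingredient alone suffices; the elegance of the final formula is that it simultaneously exhibits divergence structure and the detailed three-dimensional algebraic cancellation.
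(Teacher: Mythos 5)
The paper does not actually prove this lemma: it is quoted from Chow--Hamilton, and the ``proof'' given is the citation \cite{ChowHam}, so your attempt is really a reconstruction of that computation. Your route --- linearize $R_{ij}$ under $\partial_t g_{ij}=-2X_{ij}$, use the vanishing of the Weyl tensor in dimension three to express all curvature in terms of $P$, and use $\nabla_jP^{ij}=0$ to exhibit the divergence structure --- is the natural one and is in the same spirit as the original verification. As written, though, it contains one concrete error. Your three-dimensional decomposition is miscopied: with $P_{ij}=R_{ij}-\tfrac12Rg_{ij}$ one has $R=-2P$, and the correct identity is
\[
R_{ijkl}=g_{ik}P_{jl}-g_{il}P_{jk}+g_{jl}P_{ik}-g_{jk}P_{il}-P\left(g_{ik}g_{jl}-g_{il}g_{jk}\right),
\]
with coefficient $-P$, not $-\tfrac{P}{2}$; on a constant-curvature metric (where $P_{ij}=-Kg_{ij}$, $P=-3K$) your version returns $-\tfrac{K}{2}\left(g_{ik}g_{jl}-g_{il}g_{jk}\right)$ instead of $K\left(g_{ik}g_{jl}-g_{il}g_{jk}\right)$. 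Since every subsequent conversion --- the formula for $X_{ij}$, the curvature commutators $[\nabla_k,\nabla_l]P^{ik}$ --- is fed through this identity, the slip would propagate through all of the bookkeeping.

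Beyond that there is a genuine gap: the decisive step is asserted rather than performed. Saying that matching the two sides ``forces the algebraic residue to be $-\det P\,g^{ij}-XP^{ij}$'' is not an argument, because that residue is precisely the content of the lemma. What actually produces it are the algebraic identities that follow from the (corrected) decomposition: $X_{ij}$ is the adjugate of the Einstein tensor, so $X_{ij}P^{jk}=\det P\,\delta_i{}^{k}$ and $X=g^{ij}X_{ij}$ equals the second elementary symmetric function of the eigenvalues of $P$ --- equivalently the relation $V=(\det P)^{-1}X$ that this paper itself invokes later, in the proof of Theorem~\ref{mon of J}. You gesture at ``$X$ and $\det P$ being algebraically related'' but never derive the relation, and without it (or the full expansion it is meant to summarize) the zero-order terms cannot be collected into $-\det P\,g^{ij}-XP^{ij}$. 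So your proposal is a reasonable outline of the Chow--Hamilton computation, but to count as a proof you must fix the decomposition, establish the adjugate identity, and actually carry the expansion through to the stated cancellation.
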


\begin{proof}
See \cite{ChowHam}.
\end{proof}

\begin{lemma}
For every smooth $\phi$ defined on our solid torus solution, one has
\begin{equation}
\frac{d}{dt}\int_{0}^{s_{i}(t)}\phi\,\mathrm{ds}\,=\int_{0}^{s_{1}(t)}%
(\phi_{t}+\beta\gamma)\,\mathrm{ds}\,. \label{torus integration}%
\end{equation}

\end{lemma}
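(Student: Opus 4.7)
The plan is to apply the one-dimensional Reynolds transport theorem in the XCF gauge: reparameterize the moving interval $[0, s_{1}(t)]$ by a time-independent reference coordinate, differentiate under what becomes a fixed integral sign, and then exploit the evolution equation for the metric function $h$ to convert everything back into arclength form.

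Concretely, I would first use the original radial coordinate $r \in [0,1]$, which labels each point of the solid torus independently of $t$, together with the arclength relation $ds = h(r,t)\,dr$ from (\ref{DefineArcLength}), to rewrite
\begin{equation*}
\int_{0}^{s_{1}(t)} \phi \, ds = \int_{0}^{1} \phi(r,t)\, h(r,t) \, dr.
\end{equation*}
Because the endpoints of this reparameterized integral are now constants, the ordinary Leibniz rule applies and produces two terms, $\int_{0}^{1} \phi_{t}\, h\, dr$ and $\int_{0}^{1} \phi\, h_{t}\, dr$, where $\phi_{t}$ denotes the $t$-derivative at fixed material coordinate $r$ --- the same Lagrangian convention already adopted for the metric components in (\ref{xcf uvw}).

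Next, I would substitute the XCF evolution equation
\begin{equation*}
h_{t} = \frac{f_{ss} g_{ss}}{fg}\, h = \beta\gamma\, h
\end{equation*}
from (\ref{xcf fgh}) into the second term, and then undo the change of variables to return to arclength. The two integrals coalesce into a single arclength integral over $[0, s_{1}(t)]$, yielding the claimed formula.

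I do not anticipate any serious obstacle here: the lemma is essentially a bookkeeping identity, and all of the analytic input --- namely the evolution of $h$ --- is already in hand from system (\ref{xcf fgh}). The one point that deserves attention is making sure the interpretation of $\phi_{t}$ is consistent on both sides of the displayed formula (\ref{torus integration}), which is automatic once one commits to differentiating at fixed $r$. This same care is what justifies pulling the derivative $\frac{d}{dt}$ inside the integral in the first place, since viewed in $(r,t)$ coordinates the domain of integration is genuinely fixed and the integrand is smooth by short-time existence.
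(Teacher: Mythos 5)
Your approach is exactly what the paper means by ``straightforward computation'': pull back to the time-independent coordinate $r\in[0,1]$ via $ds=h(r,t)\,dr$, differentiate under the now-fixed integral sign, substitute $h_t=\beta\gamma\,h$ from (\ref{xcf fgh}), and return to arclength. One point to be careful about: carrying out that computation literally yields
\[
\frac{d}{dt}\int_{0}^{s_{1}(t)}\phi\,\mathrm{ds}=\int_{0}^{s_{1}(t)}\bigl(\phi_{t}+\phi\,\beta\gamma\bigr)\,\mathrm{ds},
\]
with a factor of $\phi$ multiplying $\beta\gamma$, rather than the displayed formula (\ref{torus integration}) as written (which also has a stray subscript $i$). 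The version with the extra $\phi$ is clearly the intended one --- it is what is actually invoked in the proof of Theorem~\ref{mon of J}, where $P_t+\beta\gamma P$ and $\partial_t(\det P)^{1/3}+\beta\gamma(\det P)^{1/3}$ appear --- so the lemma's statement contains a typo and your argument recovers the correct identity.
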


\begin{proof}
Straightforward computation.
\end{proof}

We now prove the final result of this paper.

\begin{proof}
[Proof of Theorem~\ref{mon of J}]We begin by computing%
\begin{align*}
\frac{d}{dt}\int_{0}^{s_{1}(t)}P\,\mathrm{ds}\,  &  =\int_{0}^{s_{1}(t)}%
(P_{t}+\beta\gamma P)\,\mathrm{ds}\,\\
&  =\int_{0}^{s_{1}(t)}\left\{
\begin{array}
[c]{c}%
g_{ij}(\nabla_{k}\nabla_{l}(P^{kl}P^{ij}-P^{ik}P^{jl}))\\
-3\det P-XP+2X_{ij}P^{ij}+\beta\gamma P
\end{array}
\right\}  \,\mathrm{ds}\,\\
&  =\int_{0}^{s_{1}(t)}(3\det P-\alpha\beta P-\alpha\gamma P)\,\mathrm{ds}%
\,+\left.  \langle\frac{\partial}{\partial s},P^{kl}\nabla_{l}P-P^{jl}%
\nabla_{l}P_{j}^{k}\rangle\right\vert _{0}^{s_{1}(t)}.
\end{align*}
In terms of our metric, the boundary term becomes
\[
\left.  \left[  \alpha(\beta_{s}+\gamma_{s})-\frac{\beta f_{s}(\alpha-\beta
)}{f}-\frac{\gamma g_{s}(\alpha-\gamma)}{g}\right]  \right\vert _{0}%
^{s_{1}(t)}.
\]
Let $V_{ij}$ denote the inverse of $P^{ij}$. Then%
\begin{align*}
\frac{d}{dt}\int_{0}^{s_{1}(t)}(\det P)^{\frac{1}{3}}\,\mathrm{ds}\,  &
=\int_{0}^{s_{1}(t)}(\partial_{t}(\det P)^{\frac{1}{3}}+\beta\gamma(\det
P)^{\frac{1}{3}})\,\mathrm{ds}\,\\
&  =\int_{0}^{s_{1}(t)}(\det P)^{\frac{1}{3}}(\frac{1}{3}V_{ij}\nabla
_{k}\nabla_{l}(P^{kl}P^{ij}-P^{ik}P^{jl}))\,\mathrm{ds}\,\\
&  +\int_{0}^{s_{1}(t)}(\det P)^{\frac{1}{3}}(\frac{1}{3}V_{ij}(-\det
Pg^{ij}-XP^{ij})+2X)+\beta\gamma)\,\mathrm{ds}\,\\
&  =:I_{1}+I_{2}.
\end{align*}
Let us first consider $I_{2}$. Because $V=(\det P)^{-1}X$, one has
\begin{multline*}
\int_{0}^{s_{1}(t)}(\det P)^{\frac{1}{3}}(\frac{1}{3}V_{ij}(-\det
Pg^{ij}-XP^{ij})+2X)+\beta\gamma)\,\mathrm{ds}\,\\
=\int_{0}^{s_{1}(t)}(\det P)^{\frac{1}{3}}(\frac{1}{3}X-\alpha\gamma
-\alpha\beta)\,\mathrm{ds}\,.
\end{multline*}
Now we do integration by parts on $I_{1}$ to obtain%
\begin{align*}
I_{1}  &  =-\frac{1}{2}\int_{0}^{s_{1}(t)}\nabla_{k}((\det P)^{1/3}%
V_{ij})(P^{kl}\nabla_{l}P^{ij}-P^{jl}\nabla_{l}P^{ik})\,\mathrm{ds}\,\\
&  +\left.  \left\langle (\det P)^{1/3}V_{ij}\otimes\frac{\partial}{\partial
s},P^{kl}\nabla_{l}P^{ij}-P^{jl}\nabla_{l}P^{ik}\right\rangle \right\vert
_{0}^{s_{1}(t)}\\
&  =\frac{1}{3}\int_{0}^{s_{1}(t)}\left(  \frac{1}{2}\left\vert E^{ijk}%
-E^{jik}\right\vert _{V}^{2}+\frac{1}{6}|T^{i}|_{V}^{2}\right)  (\det
P)^{1/3}\,\mathrm{ds}\,\\
&  +\left.  \left\langle (\det P)^{1/3}V_{ij}\otimes\frac{\partial}{\partial
s},P^{kl}\nabla_{l}P^{ij}-P^{jl}\nabla_{l}P^{ik}\right\rangle \right\vert
_{0}^{s_{1}(t)},
\end{align*}
where $E^{ijk}$, \emph{et cetera, }have the same meanings as in \cite{ChowHam}.

A computation shows that the boundary term is
\begin{multline*}
\left.  \langle(\det P)^{\frac{1}{3}}V_{ij}\otimes\frac{\partial}{\partial
s},P^{kl}\nabla_{l}P^{ij}-P^{jl}\nabla_{l}P^{ik}\rangle\right\vert _{0}%
^{s_{1}(t)}=\left.  (\det P)^{\frac{1}{3}}V_{ij}T^{1ij}\right\vert _{0}%
^{s_{1}(t)}\\
=\left.  (\alpha\beta\gamma)^{\frac{1}{3}}(\frac{\alpha}{\beta}\beta_{s}%
+\frac{\alpha}{\gamma}\gamma_{s}+\frac{f_{s}}{f}(\beta-\alpha)+\frac{g_{s}}%
{g}(\gamma-\alpha)\right\vert _{0}^{s_{1}(t)}.
\end{multline*}

Now we collect all of the terms above to see that
\begin{align*}
\frac{d}{dt}J  &  =\int_{0}^{s_{1}(t)}(\det P-\frac{1}{3}\alpha\beta
P-\frac{1}{3}\alpha\gamma P-(\det P)^{\frac{1}{3}}(\frac{X}{3}-\alpha
\gamma-\alpha\beta)\,\mathrm{ds}\\
&  =-\frac{1}{3}\int_{0}^{s_{1}(t)}(\frac{1}{2}|E^{ijk}-E^{jik}|_{V}^{2}%
+\frac{1}{6}|T^{i}|_{V}^{2})(\det P)^{\frac{1}{3}}\,\mathrm{ds}\\
&  +\left.  \alpha(\beta_{s}+\gamma_{s})-\frac{\beta f_{s}(\alpha-\beta)}%
{f}-\frac{\gamma g_{s}(\alpha-\gamma)}{g}\right\vert _{0}^{s_{1}(t)}\\
&  -\left.  (\alpha\beta\gamma)^{\frac{1}{3}}(\frac{\alpha}{\beta}\beta
_{s}+\frac{\alpha}{\gamma}\gamma_{s}+\frac{f_{s}}{f}(\beta-\alpha)+\frac
{g_{s}}{g}(\gamma-\alpha)\right\vert _{0}^{s_{1}(t)}.
\end{align*}
Hence%
\begin{align*}
\frac{d}{dt}J  &  \leq-\int_{0}^{s_{1}(t)}(\det P)^{\frac{1}{3}}(\frac{X}%
{3}-(\det h)^{\frac{1}{3}})\,\mathrm{ds}\\
&  -\int_{0}^{s_{1}(t)}\alpha\beta(\frac{P}{3}-(\det P)^{\frac{1}{3}%
})\,\mathrm{ds}\\
&  -\int_{0}^{s_{1}(t)}\alpha\gamma(\frac{P}{3}-(\det P)^{\frac{1}{3}%
})\,\mathrm{ds}\\
&  +\left\{
\begin{array}
[c]{c}%
\alpha(\beta_{s}+\gamma_{s})-\frac{\beta f_{s}(\alpha-\beta)}{f}-\frac{\gamma
g_{s}(\alpha-\gamma)}{g}\\
-(\alpha\beta\gamma)^{\frac{1}{3}}(\frac{\alpha}{\beta}\beta_{s}+\frac{\alpha
}{\gamma}\gamma_{s}+\frac{f_{s}}{f}(\beta-\alpha)+\frac{g_{s}}{g}%
(\gamma-\alpha)
\end{array}
\right\}  (s_{1})\\
&  +(\alpha\beta\gamma)^{\frac{1}{3}}\left\{
\begin{array}
[c]{c}%
(\frac{\alpha}{\beta}\beta_{s}+\frac{\alpha}{\gamma}\gamma_{s}+\frac{f_{s}}%
{f}(\beta-\alpha)+\frac{g_{s}}{g}(\gamma-\alpha)\\
-\alpha(\beta_{s}+\gamma_{s})-\frac{\beta f_{s}(\alpha-\beta)}{f}-\frac{\gamma
g_{s}(\alpha-\gamma)}{g}%
\end{array}
\right\}  (0).
\end{align*}
Recall that all of the curvatures are equal at the outer boundary. At the
core, $\alpha=\beta$ and all of the curvatures as well as $g$ extend to even
functions. Thus all of the boundary terms cancel. The result follows.
\end{proof}

\end{document}